\newcommand{\nocomma}{}
\newcommand{\tmname}[1]{\textsc{#1}}
\newcommand{\tmop}[1]{\ensuremath{\operatorname{#1}}}
\newenvironment{proof}{\noindent\textbf{Proof\ }}{\hspace*{\fill}$\Box$\medskip}
\newtheorem{corollary}{Corollary}
\newtheorem{lemma}{Lemma}
\newtheorem{proposition}{Proposition}
{\theorembodyfont{\rmfamily}\newtheorem{remark}{Remark}}
\newtheorem{theorem}{Theorem}
\begin{document}

\title{Topological Symmetries of $\mathbbm{R}^3$, II}

\author{Fang Sun}

\maketitle

This paper should be viewed as an extension of [KS]. It deals with orientation
reversing topological actions of finite groups $G$ on the Euclidean space
$\mathbbm{R}^3$. All actions in this paper are assumed to be effective.

\begin{remark}
  This paper combined with [KS] gives complete classification of finite
  topological symmetries of $\mathbbm{R}^3$, namely:
\end{remark}

\begin{theorem}
  Let $G$ be a finite group acting topologically on $\mathbbm{R}^3$, then $G$
  is isomorphic to a finite subgroup of $O (3)$.
\end{theorem}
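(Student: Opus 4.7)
The plan is to combine the result of [KS] for the orientation-preserving case with a case analysis of the orientation-reversing extensions. Let $G^+ \leq G$ be the kernel of the orientation homomorphism $G \to \{\pm 1\}$. When $G = G^+$, the result of [KS] gives an embedding $G \hookrightarrow SO(3) \subset O(3)$ directly, and we are done. Otherwise $[G : G^+] = 2$, and [KS] identifies $G^+$ abstractly with one of the standard finite subgroups of $SO(3)$: cyclic $C_n$, dihedral $D_n$, tetrahedral $A_4$, octahedral $S_4$, or icosahedral $A_5$.

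Fixing an orientation-reversing element $\tau \in G$, the group $G$ is determined by the isomorphism class of $G^+$, the outer class of the conjugation action $c_\tau : G^+ \to G^+$, and the element $\tau^2 \in G^+$. On the target side, the finite subgroups of $O(3)$ containing a prescribed rotation subgroup with index two are classically enumerated (the direct products $H \times \langle -I \rangle$ and the mixed ``pinor-type'' subgroups). I would first cross-reference the abstract extensions $1 \to G^+ \to G \to \mathbbm{Z}/2 \to 1$ against this list; this step is purely algebraic and reduces the problem to a finite check per family.

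The main obstacle is showing that every topologically realized extension lies in the $O(3)$ list, i.e.\ ruling out outer automorphisms of $G^+$ and values of $\tau^2$ that do not arise from $O(3)$-conjugation. The strategy is to extend the $G$-action to $S^3 = \mathbbm{R}^3 \cup \{\infty\}$ fixing $\infty$, then use Smith theory to control the fixed set of $\tau$ (and of $\tau\sigma$ for $\sigma \in G^+$) as a $\mathbbm{Z}/2$-cohomology submanifold of $S^3$. The interaction of these fixed sets with the $G^+$-singular locus, whose topology was pinned down in [KS], should force $c_\tau$ to lie among the $O(3)$-conjugations of $G^+$, and simultaneously restrict $\tau^2$ to a central or axial element. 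Carrying out this analysis case-by-case for the five admissible $G^+$, and in particular distinguishing the split from the non-split extensions via the parity of $\tau^2$, is expected to form the bulk of the paper.
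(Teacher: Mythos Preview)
Your outline matches the paper's strategy: reduce to the index-two case, split by the isomorphism type of $G^{+}$, extend to $S^{3}$, and use Smith-theoretic fixed-set analysis to eliminate the extensions not realized in $O(3)$. The paper's specific mechanism for the elimination step is to isolate six minimal ``Obstruction Kernels'' (Propositions~1--6) that cannot act on $\mathbbm{R}^{3}$ and then show that every non-$O(3)$ extension contains one of them, with the key technical input being the dihedral fixed-set computation of Theorem~2; these are exactly the concrete tools your proposal gestures at but does not name.
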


The structure of this paper is very similiar to that of [KS]. We start with
some preliminary result and then define six types of Obstruction Kernels, from
Type A to Type F.

Given the classification of finite groups acting (topologically) orientation
presercingly on $\mathbbm{R}^3$, it is clear the group $G$ is an extension of
such group by $\mathbbm{Z}_2$. The Obstruction Kernels will allow us to
exclude all possibilities where the group $G$ is not contained in $O (3)$.

\

\section{Preliminaries}

\begin{lemma}
  If $f$ is an orientation reversing involution of $S^3$ with fixed point set
  $S^2$, then $f$ permutes the two components of $S^3 - S^2$.
\end{lemma}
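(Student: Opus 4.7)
My plan is to suppose, for contradiction, that $f$ preserves each component of $S^{3}\setminus S^{2}$, and derive a contradiction from P.~A.~Smith's fixed-point theorem.

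First I would set up notation. By Jordan--Brouwer separation, $S^{3}\setminus S^{2}$ has exactly two open components $A$ and $B$, each with topological frontier $S^{2}$; write $\bar{A}=A\cup S^{2}$ and $\bar{B}=B\cup S^{2}$, so that $\bar{A}\cap\bar{B}=S^{2}$ and $\bar{A}\cup\bar{B}=S^{3}$. Since $f$ is a self-homeomorphism of $S^{3}$ fixing $S^{2}$ pointwise, it permutes $\{A,B\}$; it suffices to rule out the case $f(A)=A$.

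Next I would show that $\bar{A}$ is mod-$2$ acyclic. Applying Mayer--Vietoris with $\mathbb{F}_{2}$ coefficients to the cover $S^{3}=\bar{A}\cup\bar{B}$ and using the $\mathbb{F}_{2}$-homology of $S^{2}$ and $S^{3}$, one checks that the MV connecting homomorphism $H_{3}(S^{3};\mathbb{F}_{2})\to H_{2}(S^{2};\mathbb{F}_{2})$ is an isomorphism (it sends the fundamental class of $S^{3}$ to that of the separating sphere), and concludes $\tilde{H}_{*}(\bar{A};\mathbb{F}_{2})=0$.

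Third, I would invoke Smith theory. If $f(A)=A$ then $f|_{\bar{A}}$ generates a $\mathbb{Z}_{2}$-action on the mod-$2$ acyclic, finite-dimensional, compact Hausdorff space $\bar{A}$, whose fixed set is $\mathrm{Fix}(f)\cap\bar{A}=S^{2}$. Smith's theorem asserts that the fixed set of such an action must itself be mod-$2$ acyclic, contradicting $\tilde{H}_{2}(S^{2};\mathbb{F}_{2})=\mathbb{F}_{2}\neq 0$. Hence $f(A)=B$.

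The main thing to verify carefully is the applicability of Smith's theorem in the purely topological category; this is not an issue since $\bar{A}$ is a compact subset of the $3$-manifold $S^{3}$, hence finite-dimensional and paracompact, which suffices (see e.g.\ Bredon). Curiously, the orientation-reversing hypothesis is not explicitly used in this argument---indeed, any involution of $S^{3}$ whose fixed set is a $2$-sphere is automatically orientation reversing, again by Smith theory.
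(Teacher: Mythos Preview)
Your proof is correct and follows essentially the same line as the paper's: assume $f$ preserves a complementary component and invoke Smith theory for a contradiction. The paper is slightly leaner in that it works directly with the \emph{open} component $A$ (which is mod-$2$ acyclic by Alexander duality applied to $S^2\subset S^3$); since a $\mathbb{Z}_2$-action on a mod-$2$ acyclic space has nonempty fixed set while $\mathrm{Fix}(f)\cap A=\emptyset$, the contradiction is immediate. You instead pass to the closure $\bar A$ and use the stronger Smith conclusion that the fixed set must itself be mod-$2$ acyclic, which $S^{2}$ is not. Both variants work.

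One small technical remark: your Mayer--Vietoris step uses the closed cover $S^{3}=\bar A\cup\bar B$, whose interiors $A,B$ do not cover $S^{3}$; for a possibly wild $S^{2}$ this need not yield an exact sequence in singular homology. The desired acyclicity of $\bar A$ is nonetheless true---Alexander duality (with the acyclic complement $B=S^{3}\setminus\bar A$), or Mayer--Vietoris in \v Cech cohomology (valid for closed covers and in any case the natural setting for Smith theory as in Bredon), gives it directly. Working with the open $A$ as the paper does sidesteps this point entirely.
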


\begin{proof}
  Suppose the contrary horlds. Let $S^3 - S^2$ be $A \cup B$ where $A, B$ are
  the two components. $f$ restricts to a homeomorphism on $A$, i.e.,
  $\mathbbm{Z}_2$ acts on $A$. Since $A$ is acyclic, $A^{\mathbbm{Z}_2}$ is
  nonempty according to Smith Theory (cf. [B] p.145). This is impossible since
  $(S^3)^{\mathbbm{Z}_2} = S^2$.
\end{proof}

\begin{theorem}
  If $D_{2 n} (n > 2)$ acts on $\mathbbm{R}^3$ such that $\mathbbm{Z}_n
  \subseteq D_{2 n}$ is the collection of orientation preserving
  homeomorphisms, then $(S^3)^{D_{2 n}} \cong S^1$.
\end{theorem}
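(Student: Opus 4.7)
The plan is to one-point compactify to $S^3 = \mathbbm{R}^3 \cup \{\infty\}$, extending the action by letting $D_{2n}$ fix $\infty$; then $\mathbbm{Z}_n$ remains precisely the orientation-preserving part. By [KS] the $\mathbbm{Z}_n$-action on $\mathbbm{R}^3$ is topologically equivalent to a standard rotation about a line, so $(S^3)^{\mathbbm{Z}_n} =: C$ is a tame unknotted $S^1$ through $\infty$. Any $\tau \in D_{2n} \setminus \mathbbm{Z}_n$ normalizes $\mathbbm{Z}_n$ and hence preserves $C$ setwise, so $(S^3)^{D_{2n}} = C \cap (S^3)^\tau$, and the task reduces to showing $\tau$ acts trivially on $C$.

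The restriction $\tau|_C$ is an involution of $S^1$, so up to conjugacy it is either the identity, a reflection with two fixed points, or a free rotation by $\pi$. The rotation case is excluded immediately because $\tau$ fixes $\infty \in C$. To rule out the reflection case I would argue locally at $\infty$: using the linearization of $\mathbbm{Z}_n$ to choose a $\mathbbm{Z}_n$-invariant round ball around $\infty$ and intersecting it with its $\tau$-image yields a $D_{2n}$-invariant ball neighborhood whose boundary $\Sigma \cong S^2$ inherits a $D_{2n}$-action in which $\sigma$ (a generator of $\mathbbm{Z}_n$) acts as a rotation fixing the two poles $C \cap \Sigma$, while $\tau$ acts as an orientation-reversing involution satisfying $\tau\sigma\tau^{-1} = \sigma^{-1}$.

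At this stage the classification of topological orientation-reversing involutions of $S^2$ takes over: up to conjugacy each is either the antipode (empty fixed set) or a reflection (fixed set a circle). The antipode commutes with every rotation, so for $n > 2$ it is incompatible with $\tau\sigma\tau^{-1} = \sigma^{-1}$; hence $\tau|_\Sigma$ is a reflection, and the dihedral relation forces its fixed circle $K$ to contain both poles of $\sigma|_\Sigma$. Those poles are therefore fixed by $\tau|_\Sigma$. On the other hand, if $\tau|_C$ were a nontrivial reflection of $C$, it would reverse the direction of $C$ at $\infty$ and hence swap the two poles on $\Sigma$, contradicting the previous conclusion. Therefore $\tau|_C$ is the identity, and $(S^3)^{D_{2n}} = C \cong S^1$.

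The main obstacle I foresee is the local step at $\infty$: producing a genuine $D_{2n}$-invariant link 2-sphere in the purely topological (rather than smooth) category, and invoking the correct classification of its orientation-reversing involutions. Both rely on classical ingredients (Smith theory, together with Brouwer/Kerékjártó-type statements about finite-order homeomorphisms of $S^2$), but each must be set up carefully to avoid topological wildness at the added fixed point.
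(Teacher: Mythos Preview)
Your strategy is genuinely different from the paper's, and the obstacle you flag at the end is real and, as written, unfilled. Two specific points:

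\begin{itemize}
\item The appeal to [KS] for ``the $\mathbbm{Z}_n$-action is topologically equivalent to a standard rotation'' is not justified: [KS] classifies the isomorphism types of groups that can act, not actions up to conjugacy. So you do not get for free that $C$ is a tame unknotted circle, nor a round $\mathbbm{Z}_n$-invariant ball around $\infty$.
\item Even granting a $\mathbbm{Z}_n$-invariant ball $B$, the set $B\cap\tau(B)$ is $D_{2n}$-invariant but need not be a ball, and its frontier need not be a $2$-sphere in the topological category; so the ``link sphere'' $\Sigma$ on which you run the $S^2$-classification may simply fail to exist. This is exactly the local-linearization problem, and without it the rest of your argument (which is otherwise sound, including the antipode/reflection dichotomy on $\Sigma$) does not get off the ground.
\end{itemize}

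The paper avoids this difficulty entirely by a global argument. Assuming $(S^3)^{D_{2n}}\cong S^0$ for contradiction, it splits into two cases according to whether every orientation-reversing involution $a^ib$ has fixed set $S^0$, or some has fixed set $S^2$. In the first case $D_{2n}$ acts freely on $S^3\setminus S^1$, which is a homology $1$-sphere, forcing $D_{2n}$ cyclic. In the second case it uses the elementary Lemma~1 (an orientation-reversing involution with fixed $S^2$ swaps the complementary components) together with the observation that $a$ carries one such fixed $2$-sphere to another meeting it only in $S^0$; tracking which complementary component contains a fixed arc $\mathbbm{R}_-\subset S^1\setminus S^0$ produces a strictly nested chain $A_0\subsetneq aA_0\subsetneq\cdots\subsetneq a^nA_0=A_0$, a contradiction. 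No invariant link sphere, no local linearization, and no conjugacy-to-standard input is needed---only Smith theory for the shapes of fixed sets and the Jordan--Brouwer separation used implicitly in Lemma~1.

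If you want to salvage your approach, you would need an independent source for an invariant $2$-sphere link of $\infty$ with effective $D_{2n}$-action (e.g.\ a local linearization theorem for finite topological actions at an isolated fixed point in dimension~$3$); absent that, the paper's global route is both cleaner and more elementary.
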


\begin{proof}
  Assume this is not the case. Then $(S^3)^{D_{2 n}} =
  [(S^3)^{\mathbbm{Z}_n}]^{\mathbbm{Z}_2} = (S^1)^{\mathbbm{Z}_2} = S^0$. Let
  $a, b$ be standard generators of $D_{2 n}$. For $0 \leqslant i \leqslant n -
  1$, $a^i b$ is an orientation reversing homeomorphism, so $(S^3)^{\langle
  a^i b \rangle} \cong S^0$ or $S^2$.
  
  {\underline{Case 1}}: If for all $i$, $(S^3)^{\langle a^u b \rangle} = S^0$,
  then the fixed point set of any nontrivial subgroup of $D_{2 n}$ is in $S^1
  = (S^3)^{\mathbbm{Z}_n}$. Thus $D_{2 n}$ acts on $S^3 - S^1$ freely. But
  this implies (since $S^3 - S^1$ is a homological 1-sphere) that $D_{2 n}$ is
  cyclic, which is impossible.
  
  {\underline{Case 2}}: If there exist $0 \leqslant i \leqslant n - 1$ such
  that $(S^3)^{\langle a^i b \rangle} \cong S^2$. Fix $i$ and denote
  $(S^3)^{\langle a^i b \rangle}$ as $S^2_0$. Now $a (S^3)^{\langle a^i b
  \rangle} = (S^3)^{a \langle a^i b \rangle a^{- 1}} = (S^3)^{\langle a^{i +
  2} b \rangle}$, denote this set as $S^2_1$(as the name suggests, it is
  homeomorphic to $S^2$). $S_1^2 \cap S_0^2 = (S^3)^{\langle a^i b, a^{i + 2}
  b \rangle} = (S^3)^{\langle a^2, a^i b \rangle} = [(S^3)^{\langle a^2
  \rangle}]^{\mathbbm{Z}_2} = (S^1)^{\mathbbm{Z}_2} = S^0$. Note that this
  $S^0$ is the fixed point set of $D_{2 n}$ and $S^0 \subseteq S^2_j$, $j = 0,
  1$.
  
  Since $\langle a \rangle$ is a cyclic group acting orientation preservingly,
  $(S^3)^{\langle a \rangle} \cong S^1$. $S^0 \subseteq S^1$, and $S^1 - S^0$
  has two components. Denote them as $\mathbbm{R}_+$ and $\mathbbm{R}_-$.
  
  {\underline{Claim}}: For $j = 0, 1$, $\mathbbm{R}_+$ and $\mathbbm{R}_-$
  lie in different components of $S^3 - S_j^2$.
  
  {\underline{Proof}}: Suppose otherwise. The homeomorphism $a^{i + 2 j} b$
  fixes $S^2_j$, and by the preceding lemma it permutes the two components. On
  the other hand, $D_{2 n} / \langle a \rangle$ acts on $(S^3)^{\langle a
  \rangle} = S^1$. In particular $a^{i + 2 j} b (\mathbbm{R}_+ \cup
  \mathbbm{R}_-) =\mathbbm{R}_+ \cup \mathbbm{R}_-$, but $\mathbbm{R}_+ \cup
  \mathbbm{R}_-$ is mapped to the other component of $S^3 - S^2_j$ which
  contains none of $\mathbbm{R}_+$ or $\mathbbm{R}_-$. This is a
  contradiction.
  
  For $j = 0, 1$, let $A_j, B_j$ be the components of $S^3 - S_j^2$ where
  $\mathbbm{R}_- \subseteq A_j, \mathbbm{R}_+ \subseteq B_j$. The intersection
  $S^1 \cap S_j^2 = (S^3)^{\langle a, a^{i + 2 j} b \rangle} = (S^3)^{D_{2 n}}
  = S^0$, whence $S_1^2 - S^0 \subseteq S^3 - S_0^2$. Now we must have either
  $S_1^2 - S^0 \subseteq A_0$ or $S_1^2 - S^0 \subseteq B_0$.
  
  If $S_1^2 - S^0 \subseteq B_0$, then $A_0 \cup S_0^2 \subseteq A_1 \cup B_1
  \cup S^0$ by taking complement. So $A_0 \subseteq A_1 \cup B_1$. Since $A_0$
  is connected, either $A_0 \subseteq A_1$ or $A_0 \subseteq B_1$. The latter
  is not possible because $\mathbbm{R}_- \nsubseteq B_1$. Thus $A_0 \subseteq
  A_1$. Obviously $A_0 \neq A_1$ and $A_0 \subset A_1$. By definition, $a A_0
  = A_1$ and $a^n A_0 = A_0$. Then $a^n A_0 = a^{n - 1} A_1 \supset a^{n - 1}
  A_0 \supset a^{n - 2} A_0 \supset \ldots \supset A_0$, a contradiction.
  
  This forces $S_1^2 - S^0 \subseteq A_0$ to be the case. But this is
  impossible by an analogous argument as the one above.
  
  Therefore all possibilities lead to contradictions and the initial
  assumption fails.
\end{proof}

\

\section{Obstruction Kernel}

\begin{proposition}
  (Obstruction Kernel of Type A) Let $G = (\mathbbm{Z}_p \oplus \mathbbm{Z}_q)
  \rtimes_{\varphi} \mathbbm{Z}_2$, where $\varphi (1)$ is multiplication by
  $1$(resp. $- 1$) on $\mathbbm{Z}_p$ (resp. $\mathbbm{Z}_q$). Then $G$ cannot
  act on $\mathbbm{R}^3$.
\end{proposition}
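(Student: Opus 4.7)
The plan is to assume $G$ acts on $\mathbbm{R}^{3}$ and derive a contradiction by combining Theorem 1 with P.\,A. Smith theory on a $2$-disk bounded by the rotation axis of the orientation-preserving part. First I would identify the orientation-preserving subgroup $G^{+}$ as $H:=\mathbbm{Z}_{p}\oplus\mathbbm{Z}_{q}$: the orientation character $\varepsilon\colon G\to\mathbbm{Z}_{2}$ must be surjective, for otherwise $G$ would act orientation preservingly and by [KS] would embed in $SO(3)$, contradicting the fact that $\mathbbm{Z}_{p}\times D_{2q}$ is not a finite rotation group. Under the standing hypothesis that $p$ is an odd prime and $q>2$ (the remaining small values can be realized orthogonally and so are implicitly excluded), a short computation of $G^{\mathrm{ab}}$ shows that $H$ is the unique index-$2$ subgroup of $G$, so $G^{+}=H$. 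Applying [KS] to the abelian orientation-preserving group $H$ forces $H$ to be cyclic, whence $\gcd(p,q)=1$, $H\cong\mathbbm{Z}_{pq}$, and the fixed set $L:=(S^{3})^{H}\cong S^{1}$; the same reasoning gives $(S^{3})^{\mathbbm{Z}_{r}}=L$ for every nontrivial cyclic subgroup $\mathbbm{Z}_{r}\le H$.

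Next I would apply Theorem 1 to the dihedral subgroup $D_{2q}=\mathbbm{Z}_{q}\rtimes\mathbbm{Z}_{2}\le G$ (valid since $q>2$) to obtain $(S^{3})^{D_{2q}}\cong S^{1}$. Since $(S^{3})^{D_{2q}}\subseteq(S^{3})^{\mathbbm{Z}_{q}}=L$ and both are topological circles, they coincide; in particular the generator $b$ of the outer $\mathbbm{Z}_{2}$ fixes $L$ pointwise. The fixed set $F:=(S^{3})^{b}$ of this orientation-reversing involution contains the circle $L$, so by Smith theory $F\cong S^{2}$; by topological Schoenflies on $F$, the Jordan curve $L\subset F$ bounds two closed $2$-disks $D_{+},D_{-}\subset F$ with $\partial D_{\pm}=L$.

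Finally, $\mathbbm{Z}_{p}$ is central in $G$, so it commutes with $b$ and preserves $F$, and it fixes $L\subset F$ pointwise. Pick any odd prime $\ell\mid p$: having no element of order $2$, $\mathbbm{Z}_{\ell}$ cannot exchange $D_{+}$ with $D_{-}$, so it acts on each closed disk. By the observation above, $(S^{3})^{\mathbbm{Z}_{\ell}}=L$, so no interior point of $D_{+}$ is fixed and $(D_{+})^{\mathbbm{Z}_{\ell}}=L\cong S^{1}$. But $D_{+}$ is contractible, hence mod-$\ell$ acyclic, and Smith theory demands that $(D_{+})^{\mathbbm{Z}_{\ell}}$ also be mod-$\ell$ acyclic --- which $S^{1}$ is not. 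This contradiction proves the proposition. The main obstacle, I expect, is the initial setup (pinning down $G^{+}=H$ from the algebraic data, and upgrading $F=(S^{3})^{b}$ to a topologically embedded $S^{2}$ containing $L$ as a Jordan curve to which Schoenflies applies); once that is in place, the Smith obstruction on $D_{+}$ is essentially automatic.
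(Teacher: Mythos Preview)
Your argument is correct, and it shares its skeleton with the paper's proof: both identify $G^{+}=\mathbbm{Z}_{p}\oplus\mathbbm{Z}_{q}$, note that the common rotation axis $L=(S^{3})^{\mathbbm{Z}_{p}}=(S^{3})^{\mathbbm{Z}_{q}}\cong S^{1}$, and pit the dihedral fixed-set theorem for $D_{2q}$ against the behaviour of the centralizing $\mathbbm{Z}_{p}$. The difference is in the endgame. The paper simply computes $(S^{3})^{\mathbbm{Z}_{p}\oplus\mathbbm{Z}_{2}}$ in two ways: as $[(S^{3})^{\mathbbm{Z}_{2}}]^{\mathbbm{Z}_{p}}$ it must be $S^{0}$ (since $(S^{3})^{\mathbbm{Z}_{2}}\cong S^{0}$ or $S^{2}$, and an odd-prime action on $S^{2}$ has even-codimensional fixed set $S^{0}$), while as $[(S^{3})^{\mathbbm{Z}_{p}}]^{\mathbbm{Z}_{2}}=[(S^{3})^{\mathbbm{Z}_{q}}]^{\mathbbm{Z}_{2}}=(S^{3})^{D_{2q}}$ it must be $S^{1}$ by the dihedral theorem---an immediate contradiction. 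You instead run the dihedral theorem forward to place $L$ inside $F=(S^{3})^{b}\cong S^{2}$, invoke Schoenflies to cut $F$ into disks, and then apply Smith acyclicity to a $\mathbbm{Z}_{\ell}$-action on a disk whose fixed set is its boundary circle. This is a valid and pleasantly geometric route, but the Schoenflies/disk step is avoidable: once you know $\mathbbm{Z}_{\ell}$ acts on $F\cong S^{2}$ with fixed set exactly $L\cong S^{1}$, Smith's parity constraint for odd primes on spheres already forbids a one-dimensional fixed set, which is precisely the shortcut the paper takes.
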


\begin{proof}
  Assume such action exists. Note that the subgroup $\mathbbm{Z}_q
  \rtimes_{\varphi} \mathbbm{Z}_2$ is a dihedral group $D_{2 q}$. If $G$ acts
  orientation preservingly, this is Obstruction Kernel of Type 0 in [KS], thus
  impossible. So we may assume the action to be not orientation preserving.
  
  The group $\mathbbm{Z}_p \oplus \mathbbm{Z}_q$ is the only subgroup of index
  2, whence it is the collection of orientation preserving homeomorphisms. In
  particular $\mathbbm{Z}_q$ acts orientation preservingly.
  
  Now $\mathbbm{Z}_p \rtimes_{\varphi} \mathbbm{Z}_2 =\mathbbm{Z}_p \oplus
  \mathbbm{Z}_2$. Therefore $(S^3)^{\mathbbm{Z}_p \oplus \mathbbm{Z}_2} =
  [(S^3)^{\mathbbm{Z}_2}]^{\mathbbm{Z}_p}$. Since the generator of
  $\mathbbm{Z}_2$ reverses orientation, $(S^3)^{\mathbbm{Z}_2} = S^2$ or
  $S^0$. In either case $[(S^3)^{\mathbbm{Z}_2}]^{\mathbbm{Z}_p} = S^0$. Now
  $(S^3)^{\mathbbm{Z}_p \oplus \mathbbm{Z}_2} =
  [(S^3)^{\mathbbm{Z}_p}]^{\mathbbm{Z}_2} = (S^1)^{\mathbbm{Z}_2}$. Note that
  $(S^3)^{\mathbbm{Z}_p} = (S^3)^{\mathbbm{Z}_p \oplus \mathbbm{Z}_q} =
  (S^3)^{\mathbbm{Z}_q} \cong S^1$, whence $(S^1)^{\mathbbm{Z}_2} = S^0 =
  [(S^3)^{\mathbbm{Z}_q}]^{\mathbbm{Z}_2} = (S^3)^{D_{2 q}}$. But by Theorem
  4, $(S^3)^{D_{2 q}} = S^1$, a contradiction.
\end{proof}

\begin{proposition}
  (Obstruction Kernel of Type B) Let $G = (\mathbbm{Z}_4 \oplus_{}
  \mathbbm{Z}_q) \rtimes_{\varphi} \mathbbm{Z}_2$, $q$ odd prime, $\varphi
  (1)$ is multiplication by $1$ (resp. $- 1$) on $\mathbbm{Z}_4$ (resp.
  $\mathbbm{Z}_q$). Then there is no action of $G$ on $\mathbbm{R}^3$ such
  that $\mathbbm{Z}_4 \oplus_{} \mathbbm{Z}_q$ is the collection of
  orientation preserving homeomorphisms.
\end{proposition}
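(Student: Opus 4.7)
The plan is to mirror the strategy of Proposition 1 (Type A) while replacing its ``$\mathbbm{Z}_p$-on-$S^2$ has fixed set $S^0$'' step (which needed $p$ odd) with a Theorem 4--style component-swap argument adapted to $p=4$.

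First I would pin down all the relevant fixed sets. Since $\mathbbm{Z}_4 \oplus \mathbbm{Z}_q \cong \mathbbm{Z}_{4q}$ is cyclic and orientation preserving, [KS] gives $(S^3)^{\mathbbm{Z}_{4q}} \cong S^1$; call this circle $C$. The same reasoning forces $C = (S^3)^{\mathbbm{Z}_4} = (S^3)^{\mathbbm{Z}_q}$. Let $z$ generate the external $\mathbbm{Z}_2$. The dihedral subgroup $D_{2q} = \mathbbm{Z}_q \rtimes \langle z \rangle$ satisfies the hypothesis of Theorem 4 (here $q>2$), so $(S^3)^{D_{2q}} \cong S^1 \subseteq C$ forces $(S^3)^{D_{2q}} = C$ and hence $z$ fixes $C$ pointwise. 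Consequently $(S^3)^z \supseteq C$ has dimension at least one, so the orientation-reversing involution $z$ has fixed set a $2$-sphere $S^2_z$ (not $S^0$), and by Lemma 1 $z$ swaps the two components $A$ and $B$ of $S^3 \setminus S^2_z$.

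The contradiction comes from the element $g^2$, where $g$ generates $\mathbbm{Z}_4$. It is an orientation-preserving involution, and it commutes with $z$ (the subgroup $\mathbbm{Z}_4 \oplus \mathbbm{Z}_2 \subset G$ is a direct product), so it preserves $S^2_z$ and permutes $\{A,B\}$; a short Smith computation (together with a Lemma 1--type argument ruling out a $2$-sphere fixed set for an orientation-preserving involution) gives $(S^3)^{g^2} = C$. Suppose first that $g^2$ swaps $A$ and $B$. Then $g$ itself also permutes $\{A,B\}$, and in either sub-case ($g$ fixes each component, or $g$ swaps them) one computes $g^2(A) = A$, contradicting the swap. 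Suppose instead that $g^2$ preserves each of $A$ and $B$. Then $g^2|_A$ is a $\mathbbm{Z}_2$-action on the acyclic open ball $A$. If this action is trivial, then $A \subseteq (S^3)^{g^2} = C$, impossible on dimensional grounds. If it is nontrivial, Smith theory yields $(A)^{g^2} \neq \emptyset$; but $(A)^{g^2} = A \cap C = \emptyset$ because $C \subseteq S^2_z = \partial A$ is disjoint from the open set $A$. Either alternative produces a contradiction.

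The main obstacle I anticipate is making sure the fixed sets are well-behaved enough to support the component/Smith argument: $S^2_z$ must separate $S^3$ into two acyclic pieces in the sense of Lemma 1, and $(S^3)^{g^2}$ must be genuinely equal to the $1$-dimensional $C$ rather than something larger. Both points rest on conventions already used implicitly in Lemma 1 and Theorem 4, so no new machinery is needed. The novelty relative to Type A is that, since $\mathbbm{Z}_4$ has even order, we cannot directly shrink $(S^3)^{\mathbbm{Z}_4 \oplus \mathbbm{Z}_2}$ down to $S^0$; instead we exploit the commutativity of $g^2$ with $z$ and run the swap dichotomy on the single $2$-sphere $S^2_z$.
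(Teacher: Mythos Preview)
Your argument is correct but follows a genuinely different route from the paper's. Both proofs reach the configuration where $(S^3)^z = S^2_z$ is a $2$-sphere containing the circle $C = (S^3)^{\mathbbm{Z}_4} = (S^3)^{\mathbbm{Z}_q}$; you arrive there by applying Theorem~2 in the forward direction to force $z$ to fix $C$, while the paper saves Theorem~2 for the final contradiction. The divergence is in how the contradiction is then extracted. The paper notes that $\mathbbm{Z}_4$ acts on $S^2_z$ and invokes Eilenberg's result [E] (every action on $S^2$ is conjugate to an orthogonal one) to conclude that $(S^2_z)^{\mathbbm{Z}_4}$ is $S^0$ or empty; since this set coincides with $C^{\mathbbm{Z}_2} = (S^3)^{D_{2q}}$, Theorem~2 is violated. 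You instead bypass [E] altogether: you pass to $g^2$, use that $g$ itself permutes the two complementary components of $S^2_z$ to force $g^2(A)=A$, and then run the Smith-on-acyclic argument from Lemma~1 to locate a fixed point of $g^2$ in $A$, which is impossible because $(S^3)^{g^2}=C\subseteq S^2_z$. Your approach is more self-contained, using only the Smith-theoretic machinery already assembled in the paper, at the price of a slightly longer argument; the paper's approach is shorter but imports an external classification theorem. Two minor clean-ups: your ``$g^2$ swaps'' sub-case is already excluded by the observation about $g$, so the dichotomy collapses to a single case; and for $(S^3)^{g^2}=C$ you may simply cite the paper's standing fact (used e.g.\ in Proposition~3, Case~1) that a nontrivial orientation-preserving cyclic action has fixed set $\cong S^1$, together with $C\subseteq (S^3)^{g^2}$, rather than sketching a separate Lemma~1--type justification.
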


\begin{proof}
  Assume such action exists. By Theorem 2, to produce a contradiction it
  suffices to prove that $(S^3)^{\mathbbm{Z}_q \rtimes_{\varphi}
  \mathbbm{Z}_2} = S^0$ ($\mathbbm{Z}_q \rtimes_{\varphi} \mathbbm{Z}_2$ is
  dihedral and $\mathbbm{Z}_q$ is the orientation preserving subgroup). We
  start with computations of the fixed point sets of various subgroups of $G$.
  
  Since $\mathbbm{Z}_4 \oplus \mathbbm{Z}_q$ is cyclic, we have
  $(S^3)^{\mathbbm{Z}_4 \oplus \mathbbm{Z}_q} = (S^3)^{\mathbbm{Z}_q} \cong
  S^1$.
  
  The fixed point set $(S^3)^{\mathbbm{Z}_q \rtimes_{\varphi} \mathbbm{Z}_2}
  = [(S^3)^{\mathbbm{Z}_q}]^{\mathbbm{Z}_2} = (S^1)^{\mathbbm{Z}_2}$ is either
  $S^1$ or $S^0$.
  
  The standard copy of $\mathbbm{Z}_2$ acts orientation reversingly, whence
  $(S^3)^{\mathbbm{Z}_2} = S^2$ or $S^0$.
  
  {\underline{Case 1}}: If $(S^3)^{\mathbbm{Z}_2} = S^0$, then this forces
  $(S^3)^{\mathbbm{Z}_q \rtimes_{\varphi} \mathbbm{Z}_2} =
  (S^1)^{\mathbbm{Z}_2}$ to be $S^0$ and we obtain the contradiction we are
  looking for.
  
  {\underline{Case 2}}: If $(S^3)^{\mathbbm{Z}_2} = S^2$, then
  $(S^3)^{\mathbbm{Z}_4 \rtimes_{\varphi} \mathbbm{Z}_2} =
  (S^3)^{\mathbbm{Z}_4 \oplus \mathbbm{Z}_2} =
  [(S^3)^{\mathbbm{Z}_2}]^{\mathbbm{Z}_4} = (S^2)^{\mathbbm{Z}_4}$. According
  to [E], any action on $S^2$ is conjugate to an orthogonal one. Thus
  $(S^2)^{\mathbbm{Z}_4}$ is either $S^0$ or empty. On the other hand, we have
  computed that $(S^3)^{\mathbbm{Z}_q \rtimes_{\varphi} \mathbbm{Z}_2}$ is
  either $S^1$ or $S^0$, Combining the two results, $(S^3)^{\mathbbm{Z}_q
  \rtimes_{\varphi} \mathbbm{Z}_2} = S^0$.
\end{proof}

\begin{proposition}
  (Obstruction Kernel of Type C) Let $G =\mathbbm{Z}_p \rtimes_{\varphi}
  \mathbbm{Z}_{2^{k + 1}}, k \geqslant 1$, $p$ odd prime, $\varphi (1)$ is
  multiplication by $- 1$. Then $G$ cannot act on $\mathbbm{R}^3$.
\end{proposition}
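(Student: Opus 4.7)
The plan is to split into orientation-preserving and orientation-reversing cases and, in the latter, adapt the Case 1 argument from the proof of Theorem 4. If $G$ acts orientation preservingly, the classification in [KS] forces $G$ to be isomorphic to a finite subgroup of $SO(3)$, i.e., cyclic, dihedral, or one of $A_4, S_4, A_5$. But $G$ is non-abelian (as $p$ is odd and $\varphi$ is nontrivial) and contains an element of order $2^{k+1}\geq 4$, so it is neither cyclic nor dihedral; and a comparison of element orders with the platonic groups (of orders $12, 24, 60$) rules those out as well. So the action must be orientation reversing.

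Let $a, b$ generate $\mathbbm{Z}_p$ and $\mathbbm{Z}_{2^{k+1}}$ respectively. Since $[b,a]=a^{-2}$ generates $\mathbbm{Z}_p$ (as $p$ is odd), $G^{ab}\cong\mathbbm{Z}_{2^{k+1}}$ and the unique index-2 subgroup is $H=\langle a,b^2\rangle\cong\mathbbm{Z}_p\oplus\mathbbm{Z}_{2^k}$, which must therefore be the orientation-preserving part. I would first show $(S^3)^H\cong S^1$: Smith theory with Lefschetz gives $(S^3)^{\mathbbm{Z}_p}\cong S^1$ (odd prime, orientation preserving, $\infty$ fixed), and the central involution $b^{2^k}$ is orientation preserving (because $k\geq 1$), so $(S^3)^{\langle b^{2^k}\rangle}\cong S^1$ by the same tools; the action of $\mathbbm{Z}_p$ on this second $S^1$ has $\infty$ fixed and is therefore trivial, forcing the two circles to coincide, after which iterating Smith through the chain $\langle b^{2^k}\rangle\subseteq\langle b^{2^{k-1}}\rangle\subseteq\cdots\subseteq\langle b^2\rangle$ identifies this common $S^1$ with $(S^3)^H$.

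A direct calculation in the semidirect product shows $c^{2^k}=b^{2^k}$ for every orientation-reversing $c\in G$, so $(S^3)^c\subseteq(S^3)^{\langle b^{2^k}\rangle}=S^1$; Lefschetz gives $\chi((S^3)^c)=2$, so $(S^3)^c$ is $S^0$ or $S^2$, and the containment in $S^1$ selects $S^0$. Consequently every nontrivial subgroup $K\leq G$ has $(S^3)^K\subseteq S^1$ (equal to $S^1$ when $K\leq H$, contained in $S^0$ otherwise), so $G$ acts freely on $S^3-S^1$. By Alexander duality $S^3-S^1$ is a homological $1$-sphere, and the classical result invoked in the Case 1 part of Theorem 4's proof forces any finite group acting freely on such a space to be cyclic --- contradicting the non-abelianness of $G$. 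The main obstacle I foresee is the middle bookkeeping: in particular, verifying $c^{2^k}=b^{2^k}$ for every orientation-reversing $c$ is the key computation that forbids an $S^2$ fixed set and blocks the Emch/orthogonal route used in Proposition 2.
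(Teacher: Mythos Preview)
Your argument is essentially the paper's: dispose of the orientation-preserving case via [KS], identify the index-$2$ cyclic subgroup $H\cong\mathbbm{Z}_{2^kp}$ with $(S^3)^H\cong S^1$, show every nontrivial subgroup has fixed set inside this $S^1$, and conclude that $G$ acts freely on the homology circle $S^3-S^1$, forcing $G$ cyclic. The paper reaches the containment by a two-case split on whether $(S^3)^{\langle b\rangle}$ is $S^2$ or $S^0$, eliminating $S^2$ because then $(S^3)^{\langle b^2\rangle}\supseteq S^2$ would contradict $(S^3)^{\langle b^2\rangle}\cong S^1$; your observation that $c^{2}\in H\setminus\{e\}$ (indeed $c^{2^k}=b^{2^k}$) for every orientation-reversing $c$ accomplishes the same thing uniformly and is arguably cleaner. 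One caution: your ``iterating Smith'' derivation of $(S^3)^H\cong S^1$ does not, as written, exclude the possibility that the successive $\mathbbm{Z}_2$ quotients act on the circle by reflection (fixed set $S^0$ rather than $S^1$); it is simpler, as the paper does, to invoke directly the standard fact that an orientation-preserving cyclic action on $S^3$ with a global fixed point has fixed set $S^1$.
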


\begin{proof}
  Assume such action exists. If the action is orientation preserving, then $G$
  is Obstruction Kernel of Type 2 as in [KS], which is impossible. So it to
  consider the case where the action is not orientation preserving.
  
  It is not hard to see that $\mathbbm{Z}_p \rtimes_{\varphi}
  \mathbbm{Z}_{2^k} \cong \mathbbm{Z}_{2^k p}$ is the only subgroup of $G$
  with index 2. Thus this subgroup has to be the collection of homeomorphisms
  preserving orientation.
  
  Let $b = (0, 1) \in \mathbbm{Z}_p \rtimes_{\varphi} \mathbbm{Z}_{2^{k +
  1}}$. Then $b$ reverses orientation of $\mathbbm{R}^3$, whence
  $(S^3)^{\langle b \rangle} \cong S^0$ or $(S^3)^{\langle b \rangle} \cong
  S^2$.
  
  {\underline{Case 1}}: $(S^3)^{\langle b \rangle} \cong S^2$. In this case
  $(S^3)^{\langle b^2 \rangle} \supseteq (S^3)^{\langle b \rangle} = S^2$. But
  $b^2$ is orientation preserving and thus $(S^3)^{\langle b^2 \rangle} \cong
  S^1$, a contradiction. So this case is not possible.
  
  {\underline{Case 2}}: $(S^3)^{\langle b \rangle} \cong S^0$. In this case
  $(S^3)^G = S^0$. Since $\mathbbm{Z}_p \rtimes_{\varphi} \mathbbm{Z}_{2^k}
  \cong \mathbbm{Z}_{2^k p}$ acts orientation preservingly,
  $(S^3)^{\mathbbm{Z}_{2^k p}} \cong S^1$. The quotient $G /\mathbbm{Z}_{2^k
  p} \cong \mathbbm{Z}_2$ acts on this copy of $S^1$, so $G =\mathbbm{Z}_p
  \rtimes_{\varphi} \mathbbm{Z}_{2^{k + 1}}$ acts on the complement $S^3 -
  S^1$. The fixed point set of any nontrivial subgroup $H$ of $G$ is either
  $S^1$(when $\subseteq \mathbbm{Z}_{2^k p}$) or $S^0$(when $H \nsubseteq
  \mathbbm{Z}_{\circ^k p}$). Therefore the restriction to $S^3 - S^1$ is free.
  As before, this implies that $G$ is cyclic, which is obviously not the case.
  
  Thus either case leads to a contradiction and the proposition is proven.
\end{proof}

\begin{proposition}
  (Obstruction Kernel of Type D) Let $G =\mathbbm{Z}_p \rtimes_{\varphi}
  \mathbbm{Z}_2$, $p$ odd prime, $p \equiv 1 (\tmop{mod} 4)$, $\varphi (1) = n
  \in \mathbbm{Z}_p^{\ast}$ where $n^2 = - 1 (\tmop{mod} p)$. Then $G$ cannot
  act on $\mathbbm{R}^3$.
\end{proposition}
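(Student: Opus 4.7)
The plan is to split into cases by whether the action preserves orientation. If it does, then [KS] forces $G$ to embed into $SO(3)$, and I check against the cyclic/dihedral/$A_4$/$S_4$/$A_5$ list: $|G|=4p$ with $p\equiv 1\pmod 4$ (so $p\ge 5$) is not $12$, $24$, or $60$; $G$ is non-abelian, hence not $\mathbbm{Z}_{4p}$; and the relation $n^2\equiv -1$ forces every element $a^ib^2$ to have order $2$ (since $(a^ib^2)^2=a^{i(1+n^2)}=1$), so $G$ has no element of order $2p$ and is not $D_{4p}$ either. Hence I may assume the action is not orientation-preserving. A commutator computation gives $[G,G]=\langle a\rangle$, so $G^{\mathrm{ab}}\cong\mathbbm{Z}_4$ and the unique index-$2$ subgroup is $\langle a,b^2\rangle$; since $b^2ab^{-2}=a^{n^2}=a^{-1}$, this subgroup is dihedral $D_{2p}$, which is therefore the orientation-preserving subgroup.

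The main idea is to examine the restriction of $b$ to the circle $C:=(S^3)^{\langle a\rangle}\cong S^1$ (which exists by Smith theory for the cyclic OP $\mathbbm{Z}_p$). Because $b$ normalizes $\langle a\rangle$, it preserves $C$ setwise, giving $b|_C\in\mathrm{Homeo}(S^1)$ with $(b|_C)^2=b^2|_C$ an involution of $S^1$. I would then run a trichotomy: $b^2|_C$ is the identity, a reflection, or the antipodal map of $S^1$.

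The reflection and antipodal subcases are short. If $b^2|_C$ is a reflection, then $(b|_C)^2=b^2|_C$ has degree $-1$ on $S^1$, contradicting the fact that the square of any self-homeomorphism of $S^1$ has degree $(\pm 1)^2=+1$. If $b^2|_C$ is the antipodal map, then $(S^3)^{D_{2p}}=C^{b^2|_C}=\emptyset$, but the point at infinity $\infty$ lies in $(S^3)^G\subseteq(S^3)^{D_{2p}}$ because every homeomorphism of $\mathbbm{R}^3$ extends to $S^3$ fixing $\infty$, a contradiction. The remaining subcase --- $b^2|_C$ trivial --- is the real obstacle: there $D_{2p}$ fixes $C$ pointwise, and a short Smith argument shows every nontrivial element of $D_{2p}$ (each $a^i$ since $\langle a^i\rangle=\langle a\rangle$, and each OP involution $a^ib^2$ whose fixed set is a mod-$2$ cohomology $S^1$ containing $C$) has fixed set exactly $C$, so $D_{2p}$ acts freely on the homological $1$-sphere $S^3\setminus C$; by the same free-action argument used in Case $1$ of the proof of Theorem $4$, this would force $D_{2p}$ to be cyclic, contradicting non-abelianness.
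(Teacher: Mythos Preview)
Your argument is correct. The paper itself gives no independent proof here---it simply asserts that the proof of Obstruction Kernel of Type~5 in [KS] carries over verbatim---so a line-by-line comparison is not really possible. Structurally, however, your approach differs from what the paper's remark suggests: rather than invoking a single orientation-insensitive argument from [KS], you split into the orientation-preserving case (disposed of via the $SO(3)$ subgroup list together with the element-order count ruling out $D_{4p}$) and the orientation-reversing case, which you handle by analysing the restriction of $b$ to the fixed circle $C=(S^3)^{\langle a\rangle}$. The trichotomy on $b^{2}|_{C}$ is clean: the reflection branch dies by the degree argument on $S^1$, the antipodal branch dies because the point at infinity is a global fixed point, and the identity branch reduces to the same ``free action on a homological $S^1$ forces cyclicity'' mechanism used in Case~1 of Theorem~2. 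One small point worth making explicit in the identity subcase: to pass from $C\subseteq (S^3)^{\langle a^{i}b^{2}\rangle}$ to equality you need that the fixed set of an orientation-preserving involution on $S^3$ is an \emph{actual} circle (so that two nested copies of $S^1$ coincide), not merely a mod-$2$ cohomology $S^1$; the paper uses this level of input throughout, so this is consistent with its standing conventions.
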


\begin{proof}
  The proof of Obstruction Kernel of Type 5 in [KS] carries verbatim.
\end{proof}

\begin{proposition}
  (Obstruction Kernel of Type E) Let $G = (\mathbbm{Z}_p \oplus \mathbbm{Z}_4)
  \rtimes_{\varphi} \mathbbm{Z}_2$, $p$ odd prime, $\varphi (1)$ is
  multiplication by $1$(resp. $- 1$) on $\mathbbm{Z}_p$(resp.
  $\mathbbm{Z}_4$), then $G$ cannot act on $\mathbbm{R}^3$ such that the
  collection os orientation preserving homeomorphisms is $\mathbbm{Z}_p \oplus
  \mathbbm{Z}_4$.
\end{proposition}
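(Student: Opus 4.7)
The plan is to assume such an action exists and to derive a contradiction by combining Theorem 4 with the rigidity of finite $S^2$ actions supplied by [E]. The key geometric picture we will produce is an $S^1$ that must simultaneously be the fixed set of a $\mathbbm{Z}_p$ action on some $S^2$, which the orthogonal classification forbids.

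Since $\varphi$ is trivial on $\mathbbm{Z}_p$, the summand $\mathbbm{Z}_p$ is central in $G$, so $G \cong \mathbbm{Z}_p \times D_8$ with $D_8 = \mathbbm{Z}_4 \rtimes_\varphi \mathbbm{Z}_2$. Let $a, b, c$ generate $\mathbbm{Z}_4, \mathbbm{Z}_2, \mathbbm{Z}_p$ respectively. The orientation-preserving subgroup $\mathbbm{Z}_p \oplus \mathbbm{Z}_4 \cong \mathbbm{Z}_{4p}$ is cyclic, so its fixed set on $S^3$ is an $S^1$, and this circle sits inside both $(S^3)^{\mathbbm{Z}_4}$ and $(S^3)^{\mathbbm{Z}_p}$, which are themselves $S^1$'s (cyclic orientation preserving). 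An $S^1$ contained in an $S^1$ equals it, so all three fixed sets coincide; call this common circle $S^1_0$.

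Theorem 4 applies to $D_8 \leq G$ (we have $n = 4 > 2$, and $\mathbbm{Z}_4$ is exactly its orientation-preserving part), yielding $(S^3)^{D_8} \cong S^1$. Because $(S^3)^{D_8} \subseteq (S^3)^{\mathbbm{Z}_4} = S^1_0$, the same dimension argument forces equality, so $b$ fixes $S^1_0$ pointwise. As $b$ reverses orientation, $(S^3)^b$ is $S^0$ or $S^2$, and containing the infinite set $S^1_0$ rules out $S^0$; thus $(S^3)^b \cong S^2$, which we denote $S^2_0$, with $S^1_0 \subseteq S^2_0$.

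Now $c$ commutes with $b$, so $\mathbbm{Z}_p = \langle c \rangle$ preserves $S^2_0$ and acts on it. By [E] this action is conjugate to an orthogonal action on $S^2$, and since $p$ is an odd prime no element of $O(3)$ of order $p$ can involve a reflection. Hence $\mathbbm{Z}_p$ acts on $S^2_0$ either trivially (fixed set $S^2_0$) or as a nontrivial rotation (fixed set $S^0$). But
\[
(S^2_0)^{\mathbbm{Z}_p} = S^2_0 \cap (S^3)^{\mathbbm{Z}_p} = S^2_0 \cap S^1_0 = S^1_0 \cong S^1,
\]
which is neither of these, giving the desired contradiction. The step most in need of care is promoting $(S^3)^b$ from a potential $S^0$ up to a full $S^2$ via Theorem 4 applied to $D_8$; once that is in hand, the central $\mathbbm{Z}_p$ together with [E] closes the argument.
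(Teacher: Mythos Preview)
Your argument is correct and uses the same two ingredients as the paper—Theorem~2 on $D_{2n}$ actions and Eilenberg's classification [E] of $S^2$ actions—but you assemble them in the reverse order, which lets you skip a case split. The paper argues by cases on whether $(S^3)^{\mathbbm{Z}_2}$ is $S^0$ or $S^2$; in each case it computes $(S^3)^{D_8}=S^0$ (using [E] only in the $S^2$ case) and then invokes Theorem~2 for the contradiction. You instead invoke Theorem~2 \emph{first} to force $(S^3)^{D_8}=S^1_0$, which immediately promotes $(S^3)^{\langle b\rangle}$ to an $S^2$ and eliminates the $S^0$ branch outright; the contradiction then falls out of [E] because $(S^2_0)^{\mathbbm{Z}_p}=S^1_0$ is neither $S^0$ nor $S^2$. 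This is a genuine streamlining: one application of Theorem~2 up front replaces the paper's two-branch analysis, and your final step is symmetric (ruling out both $S^0$ and $S^2$ at once) where the paper implicitly needs to observe the $\mathbbm{Z}_p$ action on $S^2_0$ is nontrivial. The only cosmetic point is that what you cite as ``Theorem~4'' is Theorem~2 in the paper's numbering.
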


\begin{proof}
  Assume such action exists. Note that $\mathbbm{Z}_4 \rtimes_{\varphi}
  \mathbbm{Z}_2 \cong D_8$. To produce a contradiction, it suffices (by Theorem
  2) to prove $(S^3)^{D_8} \cong S^0$.
  
  By our assumption that the action restricted to $\mathbbm{Z}_p \oplus
  \mathbbm{Z}_4 \cong \mathbbm{Z}_{4 p}$ is orientation preserving, we have
  $(S^3)^{\mathbbm{Z}_p} = (S^3)^{\mathbbm{Z}_{4 p}} = (S^3)^{\mathbbm{Z}_4}
  \cong S^1$.
  
  The generator of $\mathbbm{Z}_2$ reverses orientation, whence
  $(S^3)^{\mathbbm{Z}_2} \cong S^2$ or $S^0$.
  
  If $(S^3)^{\mathbbm{Z}_2} = S^0$, then $(S^3)^{D_8} =
  [(S^3)^{\mathbbm{Z}_4}]^{\mathbbm{Z}_2} = (S^1)^{\mathbbm{Z}_2} = S^0$, and
  we are done.
  
  If $(S^3)^{\mathbbm{Z}_2} = S^2$, consider the subgroup $\mathbbm{Z}_p
  \rtimes_{\varphi} \mathbbm{Z}_2 \subseteq G$. By definition it is isomorphic
  to $\mathbbm{Z}_p \oplus \mathbbm{Z}_2$. And $(S^3)^{\mathbbm{Z}_p
  \rtimes_{\varphi} \mathbbm{Z}_2} = (S^3)^{\mathbbm{Z}_p \oplus
  \mathbbm{Z}_2} = [(S^3)^{\mathbbm{Z}_2}]^{\mathbbm{Z}_p} =
  (S^2)^{\mathbbm{Z}_p} = S^0$ since any action on $S^2$ is conjugate to an
  orthogonal one. On the other hand, $(S^3)^{\mathbbm{Z}_p \rtimes_{\varphi}
  \mathbbm{Z}_2} = [(S^3)^{\mathbbm{Z}_p}]^{\mathbbm{Z}_2} =
  (S^1)^{\mathbbm{Z}_2}$. Thus $(S^1)^{\mathbbm{Z}_2} = S^0$. Since
  $(S^1)^{\mathbbm{Z}_2}$ is also $[(S^3)^{\mathbbm{Z}_4}]^{\mathbbm{Z}_2} =
  (S^3)^{D_8}$, we obtain $(S^3)^{D_8} = S^0$.
\end{proof}

\begin{proposition}
  (Obstruction Kernel of Type F) $G = Q_{4 m}$ (the generalized quaternion
  group) cannot act on $\mathbbm{R}^3$.
\end{proposition}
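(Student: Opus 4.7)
The plan is to argue by contradiction. Suppose $Q_{4m}$ acts on $\mathbbm{R}^3$, extended to $S^3$ with $\infty$ as a global fixed point. If the action is orientation preserving, then by [KS] $Q_{4m}$ must embed in $SO(3)$; but the finite subgroups of $SO(3)$ (cyclic, dihedral, $A_4$, $S_4$, $A_5$) never include a generalized quaternion group of order $\geq 8$. So I may assume the action is not orientation preserving, and let $H \leq Q_{4m}$ denote the orientation-preserving index-$2$ subgroup. Presenting $Q_{4m} = \langle x,y \mid x^{2m}=1,\ y^2=x^m,\ yxy^{-1}=x^{-1}\rangle$, its only index-$2$ subgroups are $\langle x\rangle \cong \mathbbm{Z}_{2m}$ and, when $m$ is even, two subgroups isomorphic to $Q_{2m}$.

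If $H \cong Q_{2m}$ (necessarily $m \geq 4$), then $Q_{2m}$ is itself a generalized quaternion group acting orientation preservingly on $\mathbbm{R}^3$, and the same [KS] argument (or induction on $|G|$) yields a contradiction. This leaves the main case $H = \langle x\rangle$. The key structural input I would exploit is that $Q_{4m}$ has a unique involution, namely $x^m$: any element $x^i y$ outside $\langle x\rangle$ satisfies $(x^iy)^2 = x^i \cdot (yx^iy^{-1}) \cdot y^2 = x^i \cdot x^{-i} \cdot x^m = x^m$, so has order $4$. Consequently every nontrivial subgroup $K \leq Q_{4m}$ either lies inside $\langle x\rangle$ or contains $x^m$.

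Setting $S^1 := (S^3)^{\langle x\rangle}$, which is genuinely an $S^1$ since $\langle x\rangle$ is cyclic and acts orientation preservingly, I would then verify $(S^3)^K \subseteq S^1$ for every nontrivial $K$: if $K \leq \langle x\rangle$ then $(S^3)^K$ is another cyclic-orientation-preserving fixed set, hence a topological circle containing $S^1$, which forces equality; if $x^m \in K$ then $(S^3)^K \subseteq (S^3)^{\langle x^m\rangle} = S^1$. Thus $Q_{4m}$ acts freely on $S^3 - S^1$, which is a $\mathbbm{Z}$-homology $S^1$ by Alexander duality, and the ``free action on a homology $S^1$ forces cyclicity'' argument used in Case~1 of the proof of Theorem~4 contradicts the non-abelianness of $Q_{4m}$. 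The hard step is the cyclic-$H$ case; once the unique-involution property is used to localize every fixed set inside $S^1$, the rest is an application of machinery already developed in the paper.
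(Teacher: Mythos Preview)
Your argument is correct and proceeds along a genuinely different line from the paper's. The paper's proof is a one-line reduction: it observes that the argument for Obstruction Kernel of Type~3 in [KS] carries over verbatim, invoking Obstruction Kernel of Type~C (when $m$ has an odd prime factor $p$, the subgroup $\langle x^{2m/p}, y\rangle \cong \mathbbm{Z}_p \rtimes_{-1} \mathbbm{Z}_4$ is a Type~C kernel) together with the remark following Theorem~2 of [KS] (to dispose of the case where $Q_{4m}$ is a $2$-group). Your approach bypasses Type~C entirely and instead exploits the unique-involution property of $Q_{4m}$ directly: once every nontrivial subgroup either sits inside the orientation-preserving cyclic group $\langle x\rangle$ or contains $x^m$, the fixed-set localization into $S^1$ and the free-action-on-a-homology-$S^1$ argument follow exactly as in Case~2 of Proposition~3 (and Case~1 of Theorem~2 --- your reference to ``Theorem~4'' should be Theorem~2). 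What the paper's route buys is modularity, reusing obstruction kernels already on the shelf; what yours buys is a uniform, self-contained argument that does not split on the prime factorization of $m$ and does not require the separate $2$-group input from [KS].
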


\begin{proof}
  The proof follows verbatim from the proof of Obstruction Kernel of Type 3
  (c.f. [KS]), using the remark following Theorem 2 in the same paper and
  Obstruction Kernel of Type C.
\end{proof}

\

\section{The Cyclic Case}

In [KS], we considered extensions of (finite) subgroups of $\tmop{SO} (3)$ by
$\mathbbm{Z}_p$, $p$ prime. In the following sections, the algebraic aspects
of the situations are almost the same as their counterparts in the previous
paper. That paper has given an algebraic description to the possible results
of extensions. Thus we will not repeat the algebra part of those proofs, but
to filter them with the new obstruction kernels.

\begin{theorem}
  If $G$ acts on $\mathbbm{R}^3$ such that orientation preserving subgroup is
  cyclic, then $G$ is isomorphic to a subgroup of $O (3)$.
\end{theorem}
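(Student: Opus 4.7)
The plan is to split on whether $G$ coincides with its orientation-preserving subgroup $G^+$ or whether $[G:G^+]=2$. In the first case $G$ is cyclic and hence is tautologically a subgroup of $\mathrm{SO}(3)\subset O(3)$. In the second case we have a short exact sequence $1\to\mathbbm{Z}_N\to G\to\mathbbm{Z}_2\to 1$ (where $N=|G^+|$), and the strategy is to use the algebraic classification of such extensions (borrowed from the analogous argument in [KS], with the odd prime $p$ there replaced by $2$) together with the Obstruction Kernels of Section 2 to whittle $G$ down to a visibly orthogonal group.

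First I fix an orientation-reversing $b\in G$ and examine its conjugation action on $G^+=\mathbbm{Z}_N$. Writing $N=2^a\prod p_i^{e_i}$ with $p_i$ odd, the induced involution of $G^+$ preserves each primary component; on each odd primary piece $\mathbbm{Z}_{p_i^{e_i}}$ the only involutions of the automorphism group are the identity and $-1$ (since $(\mathbbm{Z}/p^e)^\times$ is cyclic), while on $\mathbbm{Z}_{2^a}$ two additional ``exotic'' involutions appear when $a\geq 3$.

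Second, I use the Obstruction Kernels to rule out the non-orthogonal configurations. A heterogeneous action of $b$ --- trivial on one odd primary component and inversion on another, or mixing trivial and inversion with a $\mathbbm{Z}_4$-piece --- produces a subgroup matching Kernel A, B, or E. An orientation-reversing element of $2$-power order $2^{k+1}\geq 4$ whose $2^k$-th power inverts some $\mathbbm{Z}_p$ yields Kernel C, and Kernel D forbids an order-$4$ automorphism of $\mathbbm{Z}_p$ with square equal to inversion. Nonsplit inversion extensions produce either a generalized quaternion $2$-Sylow killed by Kernel F, or (when the $2$-part of $N$ is exactly $2$) a dicyclic group $\mathrm{Dic}_m$ that in turn contains $\mathbbm{Z}_p\rtimes\mathbbm{Z}_4$, another instance of Kernel C. The exotic involutions on $\mathbbm{Z}_{2^a}$ produce a semi-dihedral $2$-Sylow containing a generalized quaternion subgroup covered by Kernel F, or reduce on suitable cyclic subgroups to the trivial-or-inversion dichotomy already handled.

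After these eliminations the only surviving extensions are $\mathbbm{Z}_{2N}$, $\mathbbm{Z}_N\times\mathbbm{Z}_2$, and $D_{2N}$, each manifestly a subgroup of $O(3)$ --- realized respectively as a rotoreflection group, as axial rotations combined with the horizontal reflection, and as an axial dihedral group. The hard part is the combinatorial bookkeeping guaranteeing that every non-orthogonal extension does contain a subgroup matching one of the six kernel types; this enumeration parallels the orientation-preserving analysis in [KS], transplanted from extensions by $\mathbbm{Z}_p$ to extensions by $\mathbbm{Z}_2$, which is exactly what the paper's introductory remark lets us invoke without reproving.
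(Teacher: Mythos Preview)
Your overall architecture matches the paper's exactly: reduce to $[G:G^+]=2$, analyze the induced involution on $\mathbbm{Z}_N$ primary-component-by-primary-component, and kill the non-orthogonal possibilities with Obstruction Kernels A--F. Your bookkeeping for the odd primary pieces and for the inversion/nonsplit cases (Kernels A, B, C, E, F and the dicyclic observation) is correct and tracks the paper's Cases~1--3.

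There is, however, a genuine gap in your treatment of the exotic involutions on $\mathbbm{Z}_{2^a}$. For the involution $x\mapsto(2^{a-1}-1)x$ your argument works: the resulting $2$-group is semidihedral and does contain a generalized quaternion subgroup, so Kernel~F applies. But for the other exotic involution $x\mapsto(2^{a-1}+1)x$ the resulting $2$-Sylow is the \emph{modular} group $M_{2^{a+1}}=\langle r,s\mid r^{2^a}=s^2=1,\;srs^{-1}=r^{2^{a-1}+1}\rangle$, and this group contains \emph{no} generalized quaternion subgroup (every proper subgroup is abelian), so Kernel~F is unavailable. Your fallback---``reduce on suitable cyclic subgroups to the trivial-or-inversion dichotomy''---does not help: the automorphism $2^{a-1}+1$ restricts to the identity on every proper cyclic subgroup of $\mathbbm{Z}_{2^a}$, so the reduction only exhibits an innocuous $\mathbbm{Z}_{2^{a-1}}\times\mathbbm{Z}_2$ inside $G$ and never produces an obstruction. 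Since $M_{2^{a+1}}$ is a $2$-group containing no odd torsion, none of Kernels A--E can see it either. The paper closes this case not with the six kernels but by invoking the separate $2$-group classification (the remark following Theorem~2 of [KS]), which says directly that any finite $2$-group acting on $\mathbbm{R}^3$ already lies in $O(3)$; you need to invoke that result here as well.
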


\begin{proof}
  It suffice to consider the case where the orientation preserving subgroup is
  of index 2.
  
  There is an short exact sequence{\tmname{}}
  \[ 0 \longrightarrow \mathbbm{Z}_n \longrightarrow G \longrightarrow
     \mathbbm{Z}_2 \longrightarrow 0 \]
  where $\mathbbm{Z}_n$ is the subgroup of orientation preserving
  homeomorphisms in $G$.
  
  The algebraic possibilities are known (c.f. [KS] Proposition 3,4,5). We will
  investigate then in a way analogous to [KS]. Let $n = 2^k m$, $m$ odd. Let
  $\varphi$ be the induced action of $\mathbbm{Z}_2$ on $\mathbbm{Z}_n$. Let
  $m = P \cdot Q$ where $\varphi (1)$ restrict to a multiplication by $+
  1$(resp. $- 1$) on $\mathbbm{Z}_P$(resp. $\mathbbm{Z}_Q$) (c.f. [KS])
  
  {\underline{Case 1}}: $k = 0$($n$ is odd).
  
  As in Proposition 3 of [KS], $G$ is either cyclic, dihedral or contains an
  Obstruction Kernel of Type A. In the last case $G$ cannot act. So $G
  \subseteq O (3)$.
  
  \
  
  {\underline{Case 2}}: $k = 1$($n = 2 m$, $m$ odd)
  
  We proceed as in Proposition 4 in [KS].
  
  i)Split Case:
  
  If $Q = 1$, $G =\mathbbm{Z}_n \oplus \mathbbm{Z}_2$, which is a subgroup of
  $O (3)$.
  
  If $P = 1$, $G$ is dihedral, thus $G \subseteq O (3)$
  
  If neither $P$ nor $Q$ is $1$, $G$ contains an Obstruction Kernel of Type A,
  which is impossible.
  
  ii)Non-split Case:
  
  If $Q = 1$, then $G$ is cyclic and $G \subseteq O (3)$.
  
  If $Q > 1$, then $G$ contains an Obstruction Kernel of Type C, a
  contradiction.
  
  In sum, for $n = 2 m$, $m$ odd, $G \subseteq O (3)$.
  
  \
  
  {\underline{Case 3}}: $k \geqslant 2 \nocomma$
  
  We argue as in Proposition 5 of [KS]. There are four possibilities for
  $\varphi$ restricted on the standard copy of $\mathbbm{Z}_{2^k}$ in
  $\mathbbm{Z}_n$.
  
  i)$\varphi (1)$ is multiplication by 1.
  
  Split Case: $G =\mathbbm{Z}_{2^k m} \rtimes_{\varphi} \mathbbm{Z}_2$. Either
  $P$ or $Q$ since otherwise there will be an Obstruction Kernel of Type A in
  $G$.
  
  If $Q = 1$, then $G =\mathbbm{Z}_n \oplus \mathbbm{Z}_2$, whence $G
  \subseteq O (3)$.
  
  If $P = 1$, then $G = (\mathbbm{Z}_{2^k} \oplus \mathbbm{Z}_Q)
  \rtimes_{\varphi} \mathbbm{Z}_2$, thus contains Obstruction Kernel of Type
  B. This is impossible.
  
  Non-split Case: In this case $G \cong (\mathbbm{Z}_P \oplus \mathbbm{Z}_Q)
  \rtimes_{\phi} \mathbbm{Z}_{2^{k + 1}}$ where $\phi (1)$ is multiplication
  by 1(resp. $- 1$) on $\mathbbm{Z}_P$(resp. $\mathbbm{Z}_Q$)
  
  If $Q = 1$, $G$ is cyclic thus isomorphic to a subgroup of $O (3)$.
  
  If $Q > 1$, $G$ contains an Obstruction Kernel of Type C.
  
  \
  
  ii)$\varphi (1)$ is multiplication by $- 1$
  
  Split Case: $G \cong \mathbbm{Z}_n \rtimes_{\varphi} \mathbbm{Z}_2$. Again
  either $P = 1$ or $Q = 1$ since an Obstruction Kernel of Type A will show up
  otherwise.
  
  If $Q = 1$, then $G$ contains an Obstruction Kernel of Type E, therefore
  this case is excluded.
  
  If $P = 1$, $G$ is dihedral.
  
  Non-Split Case: In such case $G \cong (\mathbbm{Z}_P \oplus \mathbbm{Z}_Q)
  \rtimes_{\phi} Q_{4 m}$ where for some $\phi$, but $Q_{4 m}$ is Obstruction
  Kernel of Type F, which implies this case cannot occur.
  
  \
  
  iii)$\varphi (1)$ is multiplication by $2^{k - 1} + 1$, then $G$ contains
  $\mathbbm{Z}_{2^k} \rtimes_{\varphi} \mathbbm{Z}_2$, a 2-group. This however
  contradicts the remark following Theorem 2 of [KS]. Thus this case is
  impossible.
  
  \
  
  iv)$\varphi (1)$ is multiplication by $2^{k - 1} - 1$. A same argument as
  above produces a contradiction.
  
  \
  
  In all the possible cases above, $G$ has to be isomorphic to a subgroup of
  $O (3)$.
\end{proof}

\begin{remark}
  The proof actually shows that $G$ is either $\mathbbm{Z}_{2 n}$,
  $\mathbbm{Z}_n \oplus \mathbbm{Z}_2$ or $D_{2 n}$.
\end{remark}

\

\section{The Dihedral Case}

The proof of the dihedral case follows the spirit of Proposition 8 and 9 in
[KS].

\begin{theorem}
  If $G$ acts on $\mathbbm{R}^3$ such that the subgroup of orientation
  preserving homeomorphisms is $D_{2 n}$, $n$ odd, $n \geqslant 3$, then $G$
  is isomorphic to a subgroup of $O (3)$.
\end{theorem}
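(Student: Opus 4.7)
My plan is to enumerate the extensions
\[ 1 \longrightarrow D_{2n} \longrightarrow G \longrightarrow \mathbbm{Z}_2 \longrightarrow 1 \]
algebraically, following the template of Propositions 8 and 9 in [KS], and then use the Obstruction Kernels of Type A--F together with Theorem 6 to rule out every case that does not obviously embed in $O(3)$. Since $n$ is odd and $n \geqslant 3$, the rotation subgroup $\mathbbm{Z}_n \subseteq D_{2n}$ is the unique subgroup of order $n$, hence characteristic in $D_{2n}$ and therefore normal in $G$. This gives a normal chain $\mathbbm{Z}_n \triangleleft D_{2n} \triangleleft G$ with $G / \mathbbm{Z}_n$ abelian of order $4$.

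The first step is to pick a lift $\tau \in G \setminus D_{2n}$ and study $H := \langle \mathbbm{Z}_n, \tau \rangle$. This $H$ is an extension of $\mathbbm{Z}_n$ by $\mathbbm{Z}_2$ in which $\mathbbm{Z}_n$ is orientation preserving, so by Theorem 6 and the remark following it, $H$ must be one of $\mathbbm{Z}_{2n}$, $\mathbbm{Z}_n \oplus \mathbbm{Z}_2$, or $D_{2n}$. In every case $\tau$ conjugates $\mathbbm{Z}_n$ either by the identity or by inversion, so, as in [KS], I would decompose $n = P \cdot Q$ with $\tau$ acting as $+1$ on $\mathbbm{Z}_P$ and as $-1$ on $\mathbbm{Z}_Q$. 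Combined with the action of $\tau$ on the reflection coset of $D_{2n}$ and with whether the $\mathbbm{Z}_2$ extension splits, this reduces the classification to a short list of split and non-split semidirect products of $D_{2n}$ by $\mathbbm{Z}_2$.

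Next, I would match each algebraic case to an outcome. The two \emph{good} outcomes are $G \cong D_{2n} \oplus \mathbbm{Z}_2$ and $G \cong D_{4n}$, both visibly subgroups of $O(3)$. In any remaining split case I expect to exhibit an Obstruction Kernel of Type A, namely $(\mathbbm{Z}_p \oplus \mathbbm{Z}_q) \rtimes_{\varphi} \mathbbm{Z}_2$ with $p \mid P$ and $q \mid Q$; the non-split cases should instead produce an Obstruction Kernel of Type C or of Type F whenever a $2$-part of the extension is twisted non-trivially, ruling them out via Section 2.

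The step I expect to be the main obstacle is the bookkeeping when $n$ has several distinct prime divisors: there one must simultaneously track how $\tau$ twists the $P$-part, the $Q$-part, and the reflection coset of $D_{2n}$, and then locate the correct $(\mathbbm{Z}_p \oplus \mathbbm{Z}_q) \rtimes \mathbbm{Z}_2$ sitting inside $G$. The algebraic enumeration itself parallels Propositions 8 and 9 of [KS]; the genuinely new content is that the outer $\mathbbm{Z}_2$ here is orientation reversing, so the obstruction kernels of Section 2 take over from the orientation preserving kernels used in [KS].
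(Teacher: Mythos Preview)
Your split-case plan is sound and actually a little slicker than the paper's. Once you have a $\tau \in G\setminus D_{2n}$ with $\tau^2 \in \mathbbm{Z}_n$, applying the cyclic-case theorem to $H=\langle \mathbbm{Z}_n,\tau\rangle$ forces $\tau$ to act on $\mathbbm{Z}_n$ \emph{globally} by $+1$ or by $-1$; the $P\cdot Q$ decomposition is then trivial, no Type~A kernel ever appears, and you land directly on $D_{2n}\times\mathbbm{Z}_2$ (which for $n$ odd is the same group as $D_{4n}$). The paper instead enumerates all sign patterns $(b_1,\dots,b_k)\in\{\pm 1\}^k$ and kills the mixed ones with Type~A, arriving at the same two outcomes.

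The genuine gap is the non-split case. When $1\to D_{2n}\to G\to\mathbbm{Z}_2\to 1$ does not split and $n$ is odd, one has $G/\mathbbm{Z}_n\cong\mathbbm{Z}_4$, and (since $n$ is odd) $G\cong\mathbbm{Z}_n\rtimes_s\mathbbm{Z}_4$ where the generator of $\mathbbm{Z}_4$ acts by some $s\in\mathbbm{Z}_n^\ast$ with $s^2\equiv -1$ (its square is a reflection in $D_{2n}$). Two things break in your outline here. First, every $\tau\in G\setminus D_{2n}$ has $\tau^2\notin\mathbbm{Z}_n$, so $H=\langle\mathbbm{Z}_n,\tau\rangle=G$ is \emph{not} an index-$2$ extension of $\mathbbm{Z}_n$, and the cyclic-case theorem gives you nothing. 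Second, such a $G$ contains neither a Type~C kernel (that would need the $\mathbbm{Z}_4$-generator to act by $-1$ on some $\mathbbm{Z}_p$, but then $s^2\equiv 1\not\equiv -1$) nor a Type~F kernel ($Q_{4m}$ with $m>1$ odd is $\mathbbm{Z}_m\rtimes_{-1}\mathbbm{Z}_4$, same obstruction). What $G$ \emph{does} contain, for any prime $p\mid n$, is $\mathbbm{Z}_p\rtimes_s\mathbbm{Z}_4$ with $s^2\equiv -1\pmod p$: this is exactly Obstruction Kernel Type~D, and it is the kernel the paper invokes in its Case~2 to eliminate the non-split situation. Replace your ``Type~C or Type~F'' clause by Type~D and the argument goes through.
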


\begin{proof}
  It suffice to consider the case where the action is not orientation
  preserving. The first half of the proof of Proposition 8 of [KS] carries
  verbatim. We have two cases(notations are borrowed from that proof):
  
  {\underline{Case 1}}: If $(2 a_1, 2 a_2, \ldots, 2 a_n) = (0, 0, \ldots, 0)$
  
  In this case the short exact sequence
  \[ 0 \longrightarrow D_{2 n} \longrightarrow G \longrightarrow
     \mathbbm{Z}_2 \longrightarrow 0 \]
  splits and thus $G \cong D_{2 n} \rtimes_{\varphi} \mathbbm{Z}_2$, $\varphi$
  as defined in Proposition 8 of [KS].
  
  We have $b_i = \pm 1$ for all $i$. There are three subcases:
  
  i) Both $\pm 1$ appears. In this case $G$ contains an Obstruction Kernel of
  Type A, which is impossible.
  
  ii) $b_i = 1$ for all $i$. then the action $\varphi$ is trivial on the
  cyclic subgroup $\mathbbm{Z}_n$. $\varphi$ is always trivial on the period 2
  generator $b$ of $D_{2 n}$ by definition. Thus $\varphi$ is trivial and $G
  \cong D_{2 n} \times \mathbbm{Z}_2$, a subgroup of $O (3)$.
  
  iii) $b_i = - 1$ for all $i$, in this case $G$ has been computed to be
  dihedral.
  
  \
  
  {\underline{Case 2}}: if $(2 a_1, 2 a_2, \ldots, 2 a_n) = \left( \frac{p_1
  - 1}{2} p_1^{n_1 - 1}, \ldots, \frac{p_k - 1}{2} p_k^{n_k - 1} \right)$
  
  In this case $G$(as computed in [KS]) contains an Obstruction Kernel of Type
  D, which is a contradiction.
  
  Summing the above results, we see in all possible cases $G \subseteq O (3)$.
\end{proof}

\begin{proposition}
  Suppose $G$ acts on $\mathbbm{R}^3$ such that the subgroup of orientation
  preserving homeomorphisms is $D_{2 n}$, $n$ even. If $n > 2$, then the
  extension $0 \rightarrow D_{2 n} \rightarrow G \rightarrow \mathbbm{Z}_2
  \rightarrow 0$ has to split.
\end{proposition}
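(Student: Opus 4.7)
The plan is to argue by contradiction: assume the extension does not split, and exhibit a generalized quaternion subgroup $Q_{4\ell}\subseteq G$ with $\ell\geq 2$, contradicting Obstruction Kernel F. Pick a lift $g\in G\setminus D_{2n}$ of the generator of the quotient $\mathbbm{Z}_2$; non-splitting means no replacement $g\mapsto gh$ with $h\in D_{2n}$ gives $(gh)^2=1$. Because $n>2$ is even, $Z(D_{2n})=\langle a^{n/2}\rangle\cong\mathbbm{Z}_2$ is the unique characteristic $\mathbbm{Z}_2$ in $D_{2n}$, so $c:=a^{n/2}$ is central in $G$.

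Conjugation by $g$ defines an automorphism $\phi\in\tmop{Aut}(D_{2n})$ with $\phi^2$ inner, specified by $\phi(a)=a^j$ with $j^2\equiv\pm 1\pmod n$ and $\phi(b)=a^ib$ for some $i$. Following [KS], I would write $n=2^km$ with $m$ odd and decompose $m=P\cdot Q$, where $\phi$ acts as $+1$ on $\mathbbm{Z}_P$ and as $-1$ on $\mathbbm{Z}_Q$. This matches the algebraic framework already used in Theorem 11 above and in Proposition 9 of [KS], so the list of possible non-split extensions is the same one classified there.

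For each non-split algebraic class I would construct $Q_{4\ell}$ explicitly. Two prototype cases illustrate the method: (i) if $\phi$ is trivial on $\langle a\rangle$ and (after normalizing $g$) $g^2=c$, then $x=a$ and $y=bg$ satisfy $x^n=1$, $y^2=g^2=c=x^{n/2}$, and $yxy^{-1}=bab^{-1}=x^{-1}$, so $\langle a,bg\rangle\cong Q_{2n}$; (ii) if $\phi$ inverts $\langle a\rangle$ with $\phi(b)=b$ and $g^2=c$, then $x=a$ and $y=g$ directly give $\langle a,g\rangle\cong Q_{2n}$. In the mixed cases, one takes $x$ to be a suitable power of $a$ supported on the $-1$-eigenspace $\mathbbm{Z}_Q$ together with the $2^k$-part of $\langle a\rangle$, and $y$ to be $g$ or $bg$ adjusted by an appropriate rotation.

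The main obstacle will be the case-by-case verification that such $x$ and $y$ can be chosen simultaneously satisfying $x^{2\ell}=1$, $y^2=x^\ell\neq 1$, and $yxy^{-1}=x^{-1}$. The relation $y^2\neq 1$ is precisely the content of the non-splitting hypothesis; the delicate bookkeeping is to enforce the inversion and the square relation at the same time, which in the mixed cases requires combining the outer action $\phi$ with the cocycle data via a careful choice of $y$. The assumption $n>2$ enters exactly to guarantee $\ell\ge 2$, so that the resulting $Q_{4\ell}$ is indeed a (nonabelian) generalized quaternion group.
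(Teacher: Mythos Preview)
Your approach has a genuine gap: not every non-split extension of $D_{2n}$ by $\mathbbm{Z}_2$ contains a generalized quaternion subgroup, so Obstruction Kernel~F alone cannot finish the argument. The problematic family is exactly the one the paper isolates. Take $n=2m$ with $m>1$ odd, choose $s\in(\mathbbm{Z}/n)^\ast$ with $s^2\equiv -1\pmod n$, and set $G=\mathbbm{Z}_n\rtimes_f\mathbbm{Z}_4$ with $f(1)=s$. The index-$2$ subgroup $\mathbbm{Z}_n\rtimes_f 2\mathbbm{Z}_4$ is $D_{2n}$ (since $s^2=-1$), and every element of $G\setminus D_{2n}$ has the form $(a,\pm 1)$ with square $(a(1+s),2)\neq e$, so the extension is genuinely non-split. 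But the Sylow $2$-subgroup of $G$ is $\langle (m,0),(0,1)\rangle\cong\mathbbm{Z}_2\oplus\mathbbm{Z}_4$, an abelian group; hence $G$ contains no $Q_{4\ell}$ whatsoever. Your eigenspace decomposition $m=P\cdot Q$ already breaks down here: you correctly note $j^2\equiv\pm 1\pmod n$, but the splitting into $\pm 1$-pieces presupposes $j^2\equiv 1$ on the odd part, whereas in this family $j$ has multiplicative order~$4$ on every prime-power factor of $m$.

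The paper's proof takes a different route precisely to capture this case. It first uses the classification of $2$-groups acting on $\mathbbm{R}^3$ (they lie in $O(3)$) to analyze the Sylow $2$-subgroup $P\supseteq D_{2^{l+1}}$ of $G$: non-splitting forces $P\cong\mathbbm{Z}_{2^{l+1}}\oplus\mathbbm{Z}_2$, hence $l=1$ and $n=2m$ with $m$ odd. Then, using that $D_{2n}$ is centerless for $n>2$ (so abstract kernels determine extensions uniquely), it normalizes the outer action to $(0,s)\in\mathbbm{Z}_n\rtimes\mathbbm{Z}_n^\ast$ and splits into $s^2=1$ (forces a split extension, contradiction) versus $s^2=-1$ (identifies $G$ with $\mathbbm{Z}_n\rtimes_f\mathbbm{Z}_4$ and invokes Obstruction Kernel~D, not~F). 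Your quaternion construction could be made to work in several of the $s^2=1$ subcases with $l\geq 2$, but those are exactly the ones the Sylow argument already kills; the surviving non-split class needs Type~D.
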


\begin{proof}
  Assume this is not the case.
  
  Take the Sylow 2-subgroup of $D_{2 n}$. It must be a copy of $D_{2^{l +
  1}}$. Let $P$ be the Sylow 2-subgroup of $G$ containing this $D_{2^{l +
  1}}$. As a two group, $P \subseteq O (3)$, whence it is either
  $\mathbbm{Z}_{2^{l + 2}}$, $\mathbbm{Z}_{2^{l + 1}} \oplus \mathbbm{Z}_2$,
  $D_{2^{l + 2}}$ or $D_{2^{l + 1}} \times \mathbbm{Z}_2$. Containing $D_{2^{l
  + 1}}$, $P$ is not cyclic. It cannot be $D_{2^{l + 2}}$ or $D_{2^{l + 1}}
  \times \mathbbm{Z}_2$ either since that would make the extension $0
  \rightarrow D_{2 n} \rightarrow G \rightarrow \mathbbm{Z}_2 \rightarrow 0$
  splits ($P - D_{2^{l + 1}}$ contains an element of order 2). Thus $P \cong
  \mathbbm{Z}_{2^{l + 1}} \oplus \mathbbm{Z}_2$. This group is cyclic, and the
  same has to be true for $D_{2^{l + 1}}$, whence $l = 1$. In other word, $n =
  2 m$, $m$ odd.
  
  We have seen that $\tmop{Aut} D_{2 n} \cong \mathbbm{Z}_n \rtimes
  \mathbbm{Z}_n^{\ast}$. It is not hard to compute that $\tmop{Inn} D_{2 n}
  =\mathbbm{Z}_m \rtimes \{ \pm 1 \} \subseteq \mathbbm{Z}_n \rtimes
  \mathbbm{Z}_n^{\ast}$ where the embedding $\mathbbm{Z}_m \subseteq
  \mathbbm{Z}_n$ is canonical. Thus $\tmop{Out} D_{2 n} \cong \mathbbm{Z}_n
  \rtimes \mathbbm{Z}_n^{\ast} /\mathbbm{Z}_m \rtimes \{ \pm 1 \}$.
  
  The exact sequence
  \[ 0 \longrightarrow D_{2 n} \longrightarrow G \longrightarrow \mathbbm{Z}_2
     \longrightarrow 0 \]
  induces an abstract kernel $\phi : \mathbbm{Z}_2 \rightarrow \tmop{Out} D_{2
  n}$. $\phi (1)$ is represented by any conjugation of an element of $G - D_{2
  n}$ on $D_{2 n}$.
  
  Consider the subgroup $\{ e, a^m, b, a^m b \} \subseteq D_{2 n}$ where $e$
  stands for identity. This is a copy of $D_4$. Let $P'$ be a Sylow 2-subgroup
  of $G$ containing $D_4$. $P' \cong \mathbbm{Z}_4 \oplus \mathbbm{Z}_2$. In
  particular, $P'$ is abelian. Take $x \in P' - D_4$. the conjugation of $x$
  on $D_4$ is trivial. Now $x \in G - D_{2 n}$. Let $(t, s) \in \mathbbm{Z}_n
  \rtimes \mathbbm{Z}_n^{\ast} = \tmop{Aut} D_{2 n}$ be the conjugation by
  $x$. This automorphism sent $b$ to $a^t b$. Thus $t = 0 \in \mathbbm{Z}_n$.
  So $\phi (1)$ can be represented by $(0, s)$. $\phi (1)^2 = 0$ implies $s^2
  = \pm 1 \in \mathbbm{Z}_n^{\ast}$.
  
  Now assume $m = p_1^{n_1} \ldots p_k^{n_k}$ is the prime decomposition, then
  \[ \mathbbm{Z}_n^{\ast} =\mathbbm{Z}_2^{\ast} \times
     \mathbbm{Z}_{p_1^{n_1}}^{\ast} \times \ldots \times
     \mathbbm{Z}_{p_k^{n_k}}^{\ast} \cong \mathbbm{Z}_{p_1^{n_1}}^{\ast}
     \times \ldots \times \mathbbm{Z}_{p_k^{n_k}}^{\ast} \]
  Let $(b_1, \ldots, b_k)$ be the element in the rightmost group above
  corresponding to $s$.
  
  Since $D_{2 n}$ is centerless for $n > 2$, then (as computed in Proposition
  8 of [KS]) each abstract kernel corresponds to one and only one extension.
  
  {\underline{Case 1}}: If $s^2 = 1$, then $b_i = \pm 1$. Consider the
  homomorphism
  \[ \varphi : \mathbbm{Z}_2 \longrightarrow \mathbbm{Z}_n \rtimes
     \mathbbm{Z}_n^{\ast} = \tmop{Aut} D_{2 n} \]
  where $\varphi (1) = (0, s)$. The extension
  \[ 0 \longrightarrow D_{2 n} \longrightarrow D_{2 n} \rtimes_{\varphi}
     \mathbbm{Z}_2 \longrightarrow \mathbbm{Z}_2 \longrightarrow 0 \]
  induces the abstract kernel $\phi$. By uniqueness this split extension is
  equivalent to $0 \longrightarrow D_{2 n} \longrightarrow G \longrightarrow
  \mathbbm{Z}_2 \longrightarrow 0$, contradicting to the non-splitting
  assumption.
  
  {\underline{Case 2}}: If $s^2 = - 1$, then $b_i = m_i$ where $m_i^2 \equiv -
  1 (\tmop{mod} p_i^{n_i})$. Consider
  \[ f : \mathbbm{Z}_4 \longrightarrow \mathbbm{Z}_n^{\ast} \cong
     \underset{i}{\Pi} \mathbbm{Z}_{p_i^{n_i}}^{\ast} \]
  where $f (1) = \underset{i}{\Pi} m_i$. The canonical subgroup $\mathbbm{Z}_n
  \rtimes_f \mathbbm{Z}_2 \subseteq \mathbbm{Z}_n \rtimes_f \mathbbm{Z}_4$ is
  dihedral since $m^2 \equiv - 1$, and
  \[ 0 \longrightarrow D_{2 n} \longrightarrow \mathbbm{Z}_n \rtimes_f
     \mathbbm{Z}_4 \longrightarrow \mathbbm{Z}_2 \longrightarrow 0 \]
  induces the abstract kernel $\phi$. By uniqueness $G \cong \mathbbm{Z}_n
  \rtimes_f \mathbbm{Z}_4$. This however contains an Obstruction Kernel of
  Type D, which is thus impossible.
  
  In sum, either case leads to a contradiction and the assumption fails.
\end{proof}

\begin{theorem}
  If $G$ acts on $\mathbbm{R}^3$ such that the subgroup of orientation
  preserving homeomorphisms is $D_{2 n}$, $n$ even, then $G$ is isomorphic to
  a subgroup of $O (3)$.
\end{theorem}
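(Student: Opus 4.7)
The plan is to parallel Theorem 7 (the $n$-odd case), using the preceding Proposition to reduce immediately to split extensions of $D_{2 n}$ by $\mathbbm{Z}_2$.

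First, I would handle $n = 2$ separately: then $D_{2 n} = D_4$ is itself a $2$-group, so $G$ has order $8$, and the remark following Theorem 2 of [KS] puts $G$ inside $O (3)$. For $n > 2$ the preceding Proposition forces $G \cong D_{2 n} \rtimes_{\varphi} \mathbbm{Z}_2$, and $\varphi$ is determined by an involution $(t, s) \in \mathbbm{Z}_n \rtimes \mathbbm{Z}_n^{\ast} = \tmop{Aut} D_{2 n}$ satisfying $s^2 \equiv 1$ and $(1 + s) t \equiv 0 \pmod{n}$.

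I would then sweep the possibilities for $(t, s)$. The trivial case $(0, 1)$ gives $G = D_{2 n} \oplus \mathbbm{Z}_2 \subseteq O (3)$; $(t, -1)$ with $t$ even has $\varphi (1)$ equal to conjugation by the order-$2$ element $a^{t / 2} b \in D_{2 n}$, so the extension is equivalent to the trivial one, again in $O (3)$; $(t, -1)$ with $t$ odd admits the reparametrization $c \mapsto a^r c$ bringing $t$ to $1$, after which $(b c)^2 = a^{- 1}$ shows $b c$ has order $2 n$ and $c$ inverts $b c$, identifying $G \cong D_{4 n} \subseteq O (3)$. For $s$ with $s^2 \equiv 1$ but $s \not\equiv \pm 1 \pmod{n}$, Chinese Remainder on $\mathbbm{Z}_n^{\ast} \cong \prod_i \mathbbm{Z}_{p_i^{n_i}}^{\ast}$ yields primes on which $s$ acts as $+ 1$ and primes on which it acts as $- 1$; the subgroup of $G$ sitting over a mixed such pair is an Obstruction Kernel of Type A (or Type B/E when a $\mathbbm{Z}_4$ factor intervenes), contradicting the action. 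The residual case is $(n / 2, 1)$: for $4 \mid n$ the element $a^{n / 4} c$ centralises $D_{2 n}$ with square $a^{n / 2}$, exhibiting $G$ as a central extension already realised inside $O (3)$, whereas for $n \equiv 2 \pmod{4}$ the subgroup $\langle a, c \rangle$ paired with an odd-prime factor of $n / 2$ produces an Obstruction Kernel of Type E.

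The main obstacle will be bookkeeping across the last two families: different $(t, s)$ can define isomorphic $G$ once we twist by inner automorphisms of $D_{2 n}$, so care is needed to select a canonical representative in each isomorphism class before either recognising $G$ as a subgroup of $O (3)$ or locating the relevant obstruction kernel. Once the sweep is complete, every $G$ either embeds in $O (3)$ or contains one of Obstruction Kernels A--F, finishing the theorem.
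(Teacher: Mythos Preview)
Your overall plan---reduce to a split extension via the preceding Proposition and then analyse $\varphi(1)=(t,s)$---parallels the paper, and your treatment of $s=-1$ agrees with it.  The case $(t,s)=(n/2,1)$, however, is handled incorrectly in both sub-cases, and this is a genuine gap.

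For $\varphi(1)=(n/2,1)$ one has $cac^{-1}=a$ and $cbc^{-1}=a^{n/2}b$, whence
\[
(bc)^2 \;=\; b\,(cbc^{-1}) \;=\; b\,a^{n/2}b \;=\; a^{-n/2}=a^{n/2},
\qquad
(bc)\,a\,(bc)^{-1} \;=\; b\,a\,b^{-1} \;=\; a^{-1}.
\]
Thus $\langle a,\,bc\rangle\cong Q_{2n}$ is generalized quaternion, an Obstruction Kernel of Type~F, so this case is \emph{excluded}, not realised inside $O(3)$.  Your observation that (for $4\mid n$) the element $a^{n/4}c$ is central of order~$4$ with square $a^{n/2}$ is correct, but the inference is backwards: a central $\mathbbm{Z}_4$ whose square is the central involution of $D_{2n}$ is precisely what produces a quaternion subgroup, and no finite subgroup of $O(3)$ contains $Q_8$.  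For $n\equiv 2\pmod 4$ your Type~E claim also fails, since $\langle a,c\rangle\cong\mathbbm{Z}_n\times\mathbbm{Z}_2$ has $2$-primary part $\mathbbm{Z}_2\times\mathbbm{Z}_2$ and hence no $\mathbbm{Z}_4$ factor; the correct obstruction is again Type~F via $\langle a,bc\rangle$.

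The paper shortcuts your sweep over general $s$ by applying Remark~2 (the cyclic case) to the subgroup $\mathbbm{Z}_n\rtimes_\varphi\mathbbm{Z}_2\subseteq G$: since $\mathbbm{Z}_n$ is its orientation-preserving part, that subgroup must be $\mathbbm{Z}_{2n}$, $\mathbbm{Z}_n\oplus\mathbbm{Z}_2$, or $D_{2n}$, forcing $s=\pm 1$ outright.  This also closes a second gap in your outline: when $8\mid n$ the group $\mathbbm{Z}_{2^k}^{\ast}$ has square roots of~$1$ other than $\pm 1$, so for $n$ a pure power of~$2$ your CRT argument produces no odd prime and none of Types~A,~B,~E applies.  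After the reduction to $s=\pm 1$ the paper has only four residual cases---$s=1$ with $t\in\{0,n/2\}$ and $s=-1$ with $t$ even or odd---and disposes of $t=n/2$ via Type~F.
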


\begin{proof}
  If $n = 2$, then $G$ is a 2-group and the result is known. So it suffice to
  consider the case where $n > 2$.
  
  By the preceding proposition, the extension $0 \rightarrow D_{2 n}
  \rightarrow G \rightarrow \mathbbm{Z}_2 \rightarrow 0$ splits. Thus $G \cong
  D_{2 n} \rtimes_{\varphi} \mathbbm{Z}_2$ for some $\varphi : \mathbbm{Z}_2
  \longrightarrow \tmop{Aut} D_{2 n} \cong \mathbbm{Z}_n \rtimes
  \mathbbm{Z}_n^{\ast}$. Let $\varphi (1) = (t, s)$.
  
  Consider the subgroup $\mathbbm{Z}_n \rtimes_{\varphi} \mathbbm{Z}_2
  \subseteq G$. By Remark 2, this subgroup is isomorphic to either
  $\mathbbm{Z}_{2 n}$, $\mathbbm{Z}_n \oplus \mathbbm{Z}_2$ or $D_{2 n}$.
  Since $n$ is even, $\mathbbm{Z}_n \rtimes_{\varphi} \mathbbm{Z}_2$ cannot be
  cyclic. Thus there are two possibilities.
  
  {\underline{Case 1}}: $\mathbbm{Z}_n \rtimes_{\varphi} \mathbbm{Z}_2 \cong
  \mathbbm{Z}_n \oplus \mathbbm{Z}_2$
  
  In this case $s = 1$. $\varphi (1)^2 = (t, 1)^2 = (2 t, 1) = (0, 1)$. Thus
  $t = 0$ or $t = m$, $m = \frac{n}{2}$.
  
  i)If $t = 0$, then $\varphi (1) = (0, 1)$, which is the identity isomorphism
  of $D_{2 n}$, thus $G = D_{2 n} \times \mathbbm{Z}_2$.
  
  ii)If $t = m$, then $\varphi (1) = (m, 1)$. Consider the subgroup $\{ e,
  a^m, b, a^m b \}$ of $D_{2 n}$. This is isomorphic to $\mathbbm{Z}_2 \oplus
  \mathbbm{Z}_2$, and $\varphi (1)$ restrict to an isomorphism of this group.
  It is not hard to see the resulted $(\mathbbm{Z}_2 \oplus \mathbbm{Z}_2)
  \rtimes_{\varphi} \mathbbm{Z}_2$ is isomorphic to $Q_8$. But this is
  Obstruction Kernel of Type F, a contradiction.
  
  {\underline{Case 2}}: $\mathbbm{Z}_n \rtimes_{\varphi} \mathbbm{Z}_2 \cong
  D_{2 n}$
  
  In this case $s = - 1$. We divide the discussion by parity of $t$.
  
  i)If $t$ is odd. Then $G$ is dihedral as computed in Proposition 9 of [KS].
  
  ii)If $t$ is even. Let $r = \frac{t}{2}$. The element $(a^r b, 1)$ is of
  order 2, and $(a^r b, 1) \in G - D_{2 n}$. Thus $G \cong D_{2 n}
  \rtimes_{\phi} \mathbbm{Z}_2$ where $\phi (1)$ is the conjugation by $(a^r
  b, 1)$. An easy computation shows that
  \begin{eqnarray*}
    (a^r b, 1) \cdot (a, 0) \cdot (a^r b, 1) = (a, 0) &  & \\
    (a^r b, 1) \cdot (0, 1) \cdot (a^r b, 1) = (0, 1) &  & 
  \end{eqnarray*}
  Thus $\phi (1)$ is the identity and $G \cong D_{2 n} \times \mathbbm{Z}_2$.
  
  We have seen that in all possible cases, $G \nocomma \subseteq O (3)$.
\end{proof}

\begin{remark}
  The last case in the proof actually gives an alternative proof to
  Obstruction Kernel of Type 6 in [KS], since $G$ then contains a copy of
  $\mathbbm{Z}_2 \oplus \mathbbm{Z}_2 \oplus \mathbbm{Z}_2$, which is not
  allowed in the orientation preserving case.
\end{remark}

Summing up the above results, we have:

\begin{corollary}
  If $G$ acts on $\mathbbm{R}^3$ such that the subgroup of orientation
  preserving homeomorphisms is dihedral, then $G \subseteq O (3)$.
\end{corollary}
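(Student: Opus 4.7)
The plan is to derive this essentially as a bookkeeping consequence of Theorems 7 and 8, which together cover the dihedral case according to the parity of $n$. So the proof should simply enumerate the possibilities for the orientation preserving subgroup $D_{2n}$ and cite the appropriate theorem.

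First I would address the degenerate small cases: if $n=1$ then $D_{2} \cong \mathbb{Z}_2$ is cyclic, so $G$ falls under the hypothesis of Theorem 6 (the cyclic case) rather than the dihedral theorems, and we conclude $G \subseteq O(3)$. The point worth flagging is just that we are free to assume $n \geq 2$ genuinely dihedral.

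Next, for $n \geq 3$ odd, Theorem 7 gives the conclusion directly. For $n$ even (including $n=2$, where $G$ is a 2-group and the orthogonal classification of finite 2-groups acting on $\mathbb{R}^3$ is already invoked inside Theorem 8), Theorem 8 gives $G \subseteq O(3)$. Together these exhaust all parities of $n \geq 2$.

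There is no real obstacle here: the corollary is a restatement packaging the two theorems into a single sentence, and the only thing to be careful about is making sure the boundary values $n=1,2$ are accounted for by the correct earlier result. I would write the proof as a one-paragraph case split on $n$ with citations to Theorems 6, 7, and 8.
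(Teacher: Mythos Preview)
Your proposal is correct and matches the paper's approach exactly: the paper does not even supply a formal proof, writing only ``Summing up the above results'' before stating the corollary, so your case split on the parity of $n$ with citations to the cyclic and the two dihedral theorems is precisely what is intended. Note only that your numbering is shifted---in the paper the cyclic case is Theorem~3 and the dihedral cases are Theorems~4 and~5---and that your extra care with $n=1$ is harmless but not strictly needed, since in this context ``dihedral'' is being contrasted with ``cyclic'' among the finite subgroups of $\tmop{SO}(3)$.
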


\section{The $A_4$ Case}

\begin{theorem}
  If $G$ acts on $\mathbbm{R}^3$ such that the subgroup of orientation
  preserving homeomorphisms is $A_4$, then $G \subseteq O (3)$.
\end{theorem}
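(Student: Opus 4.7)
The plan is to reduce to the orientation-reversing case and then enumerate the algebraic possibilities for the extension
\[ 0 \longrightarrow A_4 \longrightarrow G \longrightarrow \mathbbm{Z}_2 \longrightarrow 0. \]
If the $G$-action preserves orientation then $G = A_4 \subseteq \tmop{SO} (3)$ by [KS], so one may assume the action is not orientation preserving, forcing $A_4$ to be the orientation-preserving subgroup of index $2$ and $|G| = 24$.

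Next I would identify $\tmop{Out} A_4$. From $\tmop{Aut} A_4 \cong S_4$ and $\tmop{Inn} A_4 \cong A_4 / Z (A_4) = A_4$, one gets $\tmop{Out} A_4 \cong \mathbbm{Z}_2$. The extension above therefore induces an abstract kernel $\phi : \mathbbm{Z}_2 \longrightarrow \tmop{Out} A_4 \cong \mathbbm{Z}_2$ with only two possibilities: trivial, or an isomorphism. Since $Z (A_4)$ is trivial, the same uniqueness principle invoked in the preceding proposition shows that each abstract kernel is realized by at most one extension up to equivalence.

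The final step is to check that both resulting groups embed in $O (3)$. If $\phi$ is trivial, the unique extension is $G \cong A_4 \times \mathbbm{Z}_2$, which embeds in $O (3)$ via the rotational symmetries of a regular tetrahedron together with the antipodal map $- I$. If $\phi$ is nontrivial, then by uniqueness $G$ must coincide with the split extension $A_4 \rtimes_{\varphi} \mathbbm{Z}_2$ where $\varphi (1)$ is any representative of the nontrivial outer automorphism; a direct check identifies this group with $S_4$, which lies in $O (3)$ as the full symmetry group of a regular tetrahedron. Either way $G \subseteq O (3)$.

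The only real work is bookkeeping: extracting $\tmop{Out} A_4$ and invoking the $Z (A_4) = 1$ uniqueness to pin down the extension from its abstract kernel. Unlike the dihedral case, no obstruction kernel has to be invoked here, because both algebraic candidates for $G$ are already realized by subgroups of $O (3)$; this is what makes the argument short.
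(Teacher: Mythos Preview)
Your argument is correct, but it takes a different route from the paper. The paper simply observes that there are fifteen isomorphism classes of groups of order $24$, of which only $A_4 \times \mathbbm{Z}_2$ and $S_4$ contain a copy of $A_4$, and both lie in $O(3)$. You instead run the extension-theoretic machinery: compute $\tmop{Out} A_4 \cong \mathbbm{Z}_2$, use $Z(A_4)=1$ to get uniqueness of the extension for each abstract kernel, and identify the two outcomes as $A_4 \times \mathbbm{Z}_2$ and $S_4$. The paper's approach is shorter provided one is willing to appeal to the classification of order-$24$ groups; your approach is more self-contained and aligns with the methodology used in the dihedral sections, where the same centerless-uniqueness principle is invoked. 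Either way no obstruction kernel is needed, exactly as you note.
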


\begin{proof}
  There are up to isomorphism 15 groups of order 24, among which only $A_4
  \times \mathbbm{Z}_2$ and $S_4$ contains $A_4$. Both are subgroup of $O
  (3)$.
\end{proof}

\section{The $S_4$ Case}

\begin{theorem}
  If $G$ acts on $\mathbbm{R}^3$ such that the subgroup of orientation
  preserving homeomorphisms is $S_4$, then $G \subseteq O (3)$.
\end{theorem}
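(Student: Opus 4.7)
The plan is to reduce immediately to the case $[G : S_4] = 2$, since otherwise $G = S_4$ already embeds in $\tmop{SO} (3) \subseteq O (3)$ and nothing needs to be shown. This leaves the short exact sequence
\[ 0 \longrightarrow S_4 \longrightarrow G \longrightarrow \mathbbm{Z}_2 \longrightarrow 0. \]

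Unlike the cyclic and dihedral cases treated in the preceding sections, no Obstruction Kernel is required here: the extension is forced to be a direct product for purely algebraic reasons. The key input is that $S_4$ is a \emph{complete} group, i.e., $Z (S_4) = 1$ and every automorphism of $S_4$ is inner, so $\tmop{Aut} (S_4) = \tmop{Inn} (S_4) \cong S_4$. Granted this, I would consider the conjugation homomorphism $G \longrightarrow \tmop{Aut} (S_4) = S_4$: its kernel is $C_G (S_4)$, and its restriction to the normal subgroup $S_4 \subseteq G$ is already surjective since $\tmop{Inn} (S_4) = \tmop{Aut} (S_4)$. Combining this with $S_4 \cap C_G (S_4) = Z (S_4) = 1$ yields a direct product decomposition $G \cong S_4 \times C_G (S_4)$.

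A quick order count then finishes the argument: $|C_G (S_4) | = 48 / 24 = 2$, so $G \cong S_4 \times \mathbbm{Z}_2$. This is the full octahedral symmetry group $O_h$, a standard finite subgroup of $O (3)$.

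The only step needing justification is the completeness of $S_4$, which is classical. In particular, in contrast to the cases handled earlier in the paper, no topological obstruction of Type A through F has to be invoked; the algebra alone pins down $G$ up to isomorphism.
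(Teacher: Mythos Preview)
Your proof is correct and follows essentially the same line as the paper's: both reduce to showing that the only extension of $S_4$ by $\mathbbm{Z}_2$ is $S_4 \times \mathbbm{Z}_2 \subseteq O(3)$. The paper simply cites [KS] for this fact, whereas you supply the standard self-contained argument via the completeness of $S_4$.
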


\begin{proof}
  The only extension of $S_4$ by $\mathbbm{Z}_2$ is $S_4 \times
  \mathbbm{Z}_2$(cf. [KS]).
\end{proof}

\section{The $A_5$ Case}

\begin{theorem}
  If $G$ acts on $\mathbbm{R}^3$ such that the subgroup of orientation
  preserving homeomorphisms is $A_5$, then $G \subseteq O (3)$.
\end{theorem}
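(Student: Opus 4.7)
The plan is to enumerate algebraically all extensions $1 \to A_5 \to G \to \mathbbm{Z}_2 \to 1$ and rule out the one not contained in $O(3)$ by restricting the hypothetical action to an auxiliary subgroup whose behavior is pinned down by an earlier theorem.

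First, since $A_5$ is centerless, each such extension is uniquely determined by its abstract kernel $\mathbbm{Z}_2 \to \operatorname{Out}(A_5) \cong \mathbbm{Z}_2$. Only two isomorphism classes arise: the split extension $G \cong A_5 \times \mathbbm{Z}_2$, which is the full icosahedral group $I_h \subseteq O(3)$; and the non-split extension $G \cong S_5$. The remaining task is therefore to exclude $G \cong S_5$.

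To exclude $S_5$, I would restrict the action to the normalizer in $S_5$ of a Sylow 5-subgroup, i.e., to the Frobenius group $F_{20} \cong \mathbbm{Z}_5 \rtimes \mathbbm{Z}_4$ with $\mathbbm{Z}_4$ acting faithfully on $\mathbbm{Z}_5$. Its unique index-2 subgroup is $F_{20} \cap A_5 \cong D_{10}$, so the restriction gives an action of $F_{20}$ on $\mathbbm{R}^3$ whose orientation-preserving part is $D_{10}$. Theorem 4 (the dihedral case with $n = 5$ odd, $n \geq 3$) then forces $F_{20}$ to embed in $O(3)$. But $F_{20}$ has trivial center, whereas any extension of $D_{10}$ by $\mathbbm{Z}_2$ realized inside $O(3)$ is necessarily $D_{10} \times \mathbbm{Z}_2 \cong D_{20}$, which has center $\mathbbm{Z}_2$. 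This is the desired contradiction.

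The main obstacle is choosing the correct auxiliary subgroup. One needs a proper subgroup of $S_5$ whose intersection with $A_5$ already falls under one of the earlier classification theorems, and which itself fails to embed in $O(3)$. The Frobenius group $F_{20}$ is the natural choice because its orientation-preserving half is dihedral with odd parameter, placing it in the scope of Theorem 4, while its failure to lie in $O(3)$ follows from a quick comparison of centers with the only available order-20 extension $D_{10} \times \mathbbm{Z}_2$.
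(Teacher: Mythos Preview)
Your argument is essentially the same as the paper's: both classify the two extensions $A_5 \times \mathbbm{Z}_2$ and $S_5$, restrict a hypothetical $S_5$-action to the Frobenius group $F_{20} = GA(1,5) \cong \mathbbm{Z}_5 \rtimes \mathbbm{Z}_4$ with orientation-preserving half $D_{10}$, invoke the dihedral case to force $F_{20}\subseteq O(3)$, and then derive a contradiction. The only cosmetic difference is in the last step: the paper lists $D_{20}$ and $D_{10}\times\mathbbm{Z}_2$ separately and rules them out by citing [KS] and a Sylow $2$-subgroup comparison, whereas you (correctly) note $D_{10}\times\mathbbm{Z}_2\cong D_{20}$ and finish with a center comparison.
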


\begin{proof}
  There are only two extensions of $A_5$ by $\mathbbm{Z}_2$: $A_5 \times
  \mathbbm{Z}_2$ and $S_5$(cf. [KS]). It suffice to prove that $S_5$ cannot act
  on $\mathbbm{R}^3$. Now suppose there is such an action.
  
  There is a subgroup of $S_5$ isomorphic to $G A (1, 5) \cong \mathbbm{Z}_5
  \rtimes \mathbbm{Z}_5^{\ast}$. This group cannot be embedded in $A_5$, thus
  the action restrict to an orientation reversing one on it (alternatively one
  can use the proof in [KS] to show that this group cannot act orientation
  preservingly). There is only one subgroup in $\mathbbm{Z}_5 \rtimes
  \mathbbm{Z}_5^{\ast}$ on index 2 and it is a copy of $D_{10}$. This $D_{10}$
  then has to be the subgroup of orientation preserving homeomorphisms.
  
  By the dihedral case, $\mathbbm{Z}_5 \rtimes \mathbbm{Z}_5^{\ast}$ has to be
  a subgroup of $O (3)$. It is not abelian, thus has to be either $D_{20}$ or
  $D_{10} \times \mathbbm{Z}_2$. Neither can be the case(the former is
  discussed in [KS], while the latter can be done by comparing Sylow
  2-subgroups). This contradicts with the assumption.
\end{proof}

\
\
\begin{flushleft}
\textbf{{\Large Acknowledgments:}}
\end{flushleft}
The author owes greatly to the generous help of Prof. Kwasik.

\
\begin{center}
\textbf{{\Large References:}}
\end{center}
\begin{flushleft}
[B] G. Bredon, Introduction to Compact Transformation Groups. Pure and Applied Mathematics, Vol. 46. Academic Press, New York, London, 1972.

[E] S. Eilenberg, Sur les transformations p{\'e}riodiques de la surface de
sph{\`e}re, Fund. Math 22 (1934), 28-41.

[KS] S. Kwasik and F. Sun, Topological Symmetries of $\mathbb{R}^3$. Preprint, Tulane University, 2016
\end{flushleft}

\end{document}